\documentclass[a4paper,11pt]{article}
\pagestyle{headings}
\usepackage[vlined]{algorithm2e}

\SetFuncSty{textsc}
\SetKwFunction{frun}{Run}
\SetKwHangingKw{arun}{\frun}
\SetKwFunction{fset}{Set}
\SetKwHangingKw{aset}{\fset}
\SetKwFunction{fselect}{Select}
\SetKwHangingKw{aselect}{\fselect}
\SetKwFunction{fcompute}{Compute}
\SetKwHangingKw{acompute}{\fcompute}
\SetKwFunction{fsolve}{Solve}
\SetKwHangingKw{asolve}{\fsolve}
\SetKwFunction{festimate}{Estimate}
\SetKwHangingKw{aestimate}{\festimate}
\SetKwFunction{fmark}{Mark}
\SetKwHangingKw{amark}{\fmark}
\SetKwFunction{frefine}{Refine}
\SetKwHangingKw{arefine}{\frefine}
\SetKwFunction{compute}{Compute}
\SetKwFunction{set}{Set}

{\algorithm}%
{\endalgorithm}

\title{Numerical analysis for constrained and unconstrained  $Q $-tensor energies for  liquid crystals}
\author{
Heiko Gimperlein\footnotemark[2]
\and Ruma R. Maity\footnotemark[2]
}


\usepackage{amsmath,amsthm,amssymb,enumerate, amsbsy}
\usepackage[T1]{fontenc}
\usepackage{gensymb}
\usepackage[square,sort&compress,comma,numbers]{natbib}
\usepackage[sc]{mathpazo}
\usepackage{multirow} 
\usepackage{newtxtext,newtxmath}
\usepackage{subfig}
\usepackage{graphicx}
\usepackage{epstopdf}
\usepackage[usenames]{color}
\usepackage[colorlinks=true]{hyperref}
\usepackage{cancel}
\usepackage[framemethod=tikz]{mdframed}
\hypersetup{allcolors=blue}
\usepackage[colorinlistoftodos]{todonotes}

\usepackage{cancel}
\usepackage[margin=3cm]{geometry}
\usepackage{tikz}
\usepackage[titletoc]{appendix}
\usepackage{caption}
\usetikzlibrary{shapes,calc}
\usepackage{verbatim}
\usepackage{mathrsfs}
\usepackage{accents}
\usepackage[utf8]{inputenc}
\usepackage{float}
\restylefloat{table}
\usepackage{multirow,diagbox,tabularx,blindtext,tabulary,tabularht,longtable,blkarray}

\usetikzlibrary{decorations.pathmorphing}
\usetikzlibrary{decorations.pathreplacing}
\usetikzlibrary{positioning}
\usetikzlibrary{shapes}
\usetikzlibrary{arrows}
\usetikzlibrary{patterns}
\usetikzlibrary{fadings}
\usetikzlibrary{plotmarks}
\usetikzlibrary{calc}
\usetikzlibrary{intersections}
\tikzstyle{every picture}+=[font=\footnotesize]
\usepackage{paralist}
\usepackage{empheq}
\usepackage{bbm}
\usepackage{latexsym}           
\usepackage{enumerate}
\usepackage{enumitem}
\usepackage{algorithm2e}

\setlist{noitemsep, topsep=0.8ex, partopsep=0pt
	, leftmargin=3em}
\setlist[1]{labelindent=\parindent}

\newlist{axioms}{enumerate}{1}
\setlist[axioms]{font=\bfseries}

\newlist{alphenum}{enumerate}{1}
\setlist[alphenum]{label=\textbf{(\alph*)}, leftmargin=4em}

\newlist{alphienum}{enumerate}{1}
\setlist[alphienum]{label=\textit{(\alph*)}}

\newlist{romanenum}{enumerate}{1}
\setlist[romanenum]{label=\textit{(\roman*)}}

\newlist{romaninenum}{enumerate*}{1}
\setlist[romaninenum]{label=\textit{(\roman*)}}
\usepackage[vlined]{algorithm2e}
\SetKwIF{If}{ElseIf}{Else}{if}{}{else if}{else}{endif}
\SetKwFor{For}{for}{}{endfor}
\usepackage[noabbrev, capitalise]{cleveref}
\usepackage{tabu}
\tabulinesep=0.5ex
\crefname{equation}{\unskip}{\unskip}
\creflabelformat{equation}{#2(#1)#3}
\newtheorem{thm}{Theorem}[section]

\newtheorem{lem}[thm]{Lemma}

\theoremstyle{definition}

\theoremstyle{remark}

\newtheorem{rem}[thm]{Remark}

\numberwithin{equation}{section}

\newcommand{\Qvec}{Q}

\newcommand{\Pvec}{P}

\newcommand{\C}{{C}}

\newcommand{\p}{\mathbf{p}}

\newcommand{\abs}[1]{\vert #1 \vert}
\newcommand{\dx}{\,{\rm d}x}

\newcommand{\vdotsop}{%
  \mathinner{\vcenter{
    \baselineskip=1ex
    \hbox{.}\hbox{.}\hbox{.}
  }}
}  





\newcommand{\I}{{\rm I}}

\newcommand{\norm}[1]{{\vert\kern-0.25ex\vert #1 
		\vert\kern-0.25ex \vert}}
\newcommand{\vertiii}[1]{{\vert\kern-0.25ex\vert\kern-0.25ex\vert #1 
		\vert\kern-0.25ex \vert\kern-0.25ex \vert}}
\newcommand{\vertiiih}[1]{{\vert\kern-0.25ex \vert\kern-0.25ex \vert #1 
		\vert\kern-0.25ex \vert\kern-0.25ex \vert}_{h}}
\newcommand{\verti}[1]{{\vert\kern-0.25ex \vert\kern-0.25ex \vert #1 
		\vert\kern-0.25ex \vert\kern-0.25ex \vert}_{1}}	
\newcommand{\vertiiii}[1]{{\vert\kern-0.25ex\vert\kern-0.25ex\vert #1 
		\vert\kern-0.25ex \vert\kern-0.25ex \vert}}

\newcommand{\vertii}[1]{{ \vert\kern-0.25ex \vert\kern-0.25ex \vert #1 
		\vert\kern-0.25ex \vert\kern-0.25ex \vert}_{1}}	
\newcommand{\vertiiiih}[1]{{ \vert\kern-0.25ex \vert\kern-0.25ex \vert #1 
		\vert\kern-0.25ex \vert\kern-0.25ex \vert}_{h}}
\newcommand{\vertiiidg}[1]{{\vert\kern-0.25ex \vert\kern-0.25ex \vert #1 
		\vert\kern-0.25ex \vert\kern-0.25ex\vert}_{\rm dG}}

\newcommand{\vertiiinc}[1]{{\vert\kern-0.25ex\vert\kern-0.25ex\vert #1 
		\vert\kern-0.25ex \vert\kern-0.25ex \vert}_{{\rm pw}}}

\newcommand{\dual}[1]{\langle #1 \rangle}


\newcommand{\vertiiiWP}[1]{{\vert\kern-0.25ex \vert\kern-0.25ex \vert #1 
		\vert\kern-0.25ex \vert\kern-0.25ex\vert}_{\rm P}}
\begin{document}
\maketitle

\renewcommand{\thefootnote}{\fnsymbol{footnote}}
\footnotetext[2]{University of Innsbruck, Innsbruck, Austria (heiko.gimperlein@uibk.ac.at, ruma.maity@uibk.ac.at). }

\begin{abstract} 
This paper introduces a comprehensive finite element approximation framework for three-dimensional Landau-de Gennes  $Q$-tensor energies for nematic liquid crystals, with a particular focus on the anisotropy of the elastic energy and the Ball-Majumdar singular potential. This potential imposes essential physical constraints on the eigenvalues of the $Q$-tensor, ensuring realistic modeling. We address the approximation of regular solutions to nonlinear elliptic partial differential equations with non-homogeneous boundary conditions associated with Landau-de Gennes  energies. The well-posedness of the discrete linearized problem is rigorously demonstrated.  The existence and local uniqueness of the discrete solution is derived using the Newton-Kantorovich theorem. Furthermore, we demonstrate an optimal order convergence rate in the energy norm and discuss the impact of eigenvalue constraints on the {\it a priori} error analysis.

\end{abstract}

\noindent {\bf Keywords:} Conforming finite element method, a priori error analysis, nematic liquid crystals, Landau-de Gennes model, Ball-Majumdar singular potential  
		\medskip
		
		\section{Introduction}\label{introduction}

 Liquid crystals are anisotropic electro-optical materials  with  characteristics of both liquid and of crystalline phases. Widely known for their use in display technology, recent  advances have expanded their applications to include biological sensors, soft robotics, smart windows, and advanced optical devices  \cite{Schwartz_2021,LAGERWALL_2012,Bisoyi_2021,Nesterkina_2024, Christophe_2013}.

Mathematically, the classical Landau-de Gennes (LDG) model encodes the properties of a liquid crystal in a bounded domain $\Omega$ in terms of the $Q$-tensor order parameter, a symmetric, traceless $3\times 3$ matrix depending on $x \in \Omega$. The order parameter $Q$ satisfies a semilinear elliptic boundary value problem involving a nonlinear nonconvex potential function.  

While the numerical analysis for classical LDG  models with a smooth potential has been well understood in recent years, as surveyed below, these smooth models may lead to physically unrealistic predictions. Physically more realistic are singular potentials first introduced by Ball and Majumdar \cite{Ball_Majumdar_2010}. Over the past 5 years finite element methods have been developed for equilibrium configurations  involving such singular potentials in \cite{Schimming_Vinals2020A} for isotropic elastic energies, and in \cite{D3SM01616A,Schimming2022tactoids, Schimming_Vinals2020B, Schimming2021} for anisotropic elastic energies. These methods have been used, in particular, to study topological defects in  nematic liquid crystals, and  tactoids when the nematic and isotropic phases coexist. 

This paper aims to provide a unified {\it a priori} error analysis for such conforming finite element approximations of both smooth and nonsmooth models for nematic liquid crystals. The analysis is new for both the (smooth) Landau-de Gennes  model and the (singular) Ball-Majumdar variant, and it readily allows to include anisotropic elastic energies. 

We restrict to $H^2$ regular solutions. This regularity of the solution is crucial in the context of the singular potential to preserve the eigenvalue constraints. The constraint influences the choice of the asymptotic regime of discretization parameter for the existence and local uniqueness of the discrete solutions, and imposes additional restrictions in comparison with the standard Landau-de Gennes  $Q$-tensor energy. To simplify the presentation, we also restrict to the challenging $3$d case.

For smooth potentials, there is a vast literature of finite element studies of the equilibrium configurations,  bifurcation diagrams,  defect points, and saddle dynamics in various confined geometries with non-homogeneous boundary conditions for the general LDG  $Q$-tensor model \cite{Majumdar_2022_anisotropy, Wang_2017_droplets, Majumdar_2024_LDG_cuboids, Zhang2021_actaNumerica}. In \cite{Gartland1998}, the authors study an abstract approach of the finite element approximation of the non-singular solution branches for the general  $Q$-tensor
 LDG  energy subject to homogeneous boundary conditions. The treatment of non-homogeneous boundary conditions and the influence of parameters on the convergence analysis  for the discontinuous Galerkin finite element approximation of regular solutions that have
 $H^2$-regularity are discussed in \cite{DGFEM} for a reduced two-dimensional (isotropic) LDG  model. These results are extended in \cite{AbstractAMRMNN2023} to include the approximation of exact solutions with minimal regularity,  and  a unified framework for the {\it a priori} and {\it a posteriori} error analysis of different lowest-order finite element methods are developed to approximate the regular solutions of the reduced 2d  LDG model. Recently, \cite{CCADRMNN2024} has derived  lower energy bounds for the nonconvex energy in related Ginzburg-Landau type semilinear problems using Crouzeix-Raviart finite elements to achieve  two-sided energy control of the complicated energy landscapes.

 In spite of the relevance and recent computational advances for singular potentials, like the Ball-Majumdar model \cite{Schimming_Vinals2020A, Schimming_Vinals2020B, Schimming2021}, the numerical analysis of singular model equations remained open. 
\\

This paper is organized as follows. Section \ref{sec:lcmodelsformulation}
reviews the relevant liquid crystal models and the mathematical problem.   
Specifically, Subsection \ref{sec:models} reviews the Landau-De Gennes theory for nematic liquid crystals, including the $Q$-tensor order parameter and physically relevant potentials. 
Section \ref{Problem formulation} describes the full mathematical problem, introducing the elastic anisotropy and a detailed formulation of the singular potential. Existence, physicality and higher regularity of solutions to the Euler-Lagrange equations are discussed. 
Section \ref{FEM-approximation} presents the finite element formulation of the Euler-Lagrange equations, examines the interpolation operator's physicality, and the well-posedness of a  discrete linearized problem. Section \ref{Analysis of nonlinearity}   addresses the treatment of nonlinearity associated with the energy-minimizing equations relevant to the analysis.
 Section \ref{existence-uniqueness} provides the existence, local uniqueness of the discrete solution, and optimal convergence rate.


\subsection*{Notation}
Let $\Tilde{ {S}}$ denotes the set of real symmetric $3\times 3$ matrices, and   $ {S}\subset \Tilde{ {S}}$ denote the set of  symmetric, traceless $3\times 3$ matrices. The set $\Tilde{ {S}}$ is a six-dimensional subspace of $\mathbb{R}^{3\times 3}.$ On $S$ we define the inner product $(\Qvec,\Pvec)_F= \text{tr}(\Qvec^T \Pvec)=\text{tr}(\Qvec \Pvec)$. The associated \emph{Frobenius}  norm is given by $\abs{\Qvec}_{F}=\sqrt{\text{tr}(\Qvec^2)}$. We use the shorthand notation $\Qvec:\Pvec = \text{tr}(\Qvec \Pvec) =\sum_{i,j} \Qvec_{ij} \Pvec_{ij}.$ Let  $\Omega \subset \mathbb{R}^3$ be a three dimensional convex domain. We write $  L^p(\Omega; S) $ for the space of $L^p$-functions in $\Omega$ taking values in $S$, with norm
$\norm{Q}_{L^p(\Omega)}=\left(\int_\Omega \abs{Q}_F^p \dx\right)^{1/p}$, $Q \in  L^p(\Omega; S) . $ More generally, we use $\norm{\bullet}_{L^p(\Omega)}$ also for the $L^p(\Omega)$-norm of any tensor field.
The Sobolev space of $Q$-tensors, $\mathcal{W}^{s,p}(\Omega):={W}^{s,p}(\Omega; S)$, is defined by
\begin{align*}
 {W}^{s,p}(\Omega; S):=\{Q=(Q_{ij}): \Omega \to S \,| \, Q_{ij}\in  W^{s,p}(\Omega; \mathbb{R}) \text{ for all } i,j=1,2,3\}.   
\end{align*}
For the Hilbert spaces $H^s(\Omega; S):={W}^{s,2}(\Omega;S),$ we use the notation $\mathcal{H}^{s}(\Omega).$ 
We define the $\mathcal{H}^1 $ inner product $(Q,P)_1:= \int_\Omega (Q_{ij}P_{ij} + \partial_k Q_{ij} \partial_k P_{ij}  ) \dx,$ and the norm $\norm{Q}_1^2:=(Q,Q)_1$. The  $\mathcal{H}^1(\Omega)$-seminorm is defined as $\abs{P}_1^2:=  \int_\Omega  \partial_k Q_{ij} \partial_k P_{ij}   \dx$.

\section{Liquid crystal models and problem formulation}\label{sec:lcmodelsformulation}
 This section introduces the mathematical models for the general Landau-de Gennes $Q$-tensor energy for nematic liquid crystals, as well as the modification by Ball and Majumdar involving a singular potential.  The existence and physicality of minimizers for both  energy functionals are reviewed, and we allow for elastic anisotropies. Furthermore, the weak formulations of the associated Euler-Lagrange equations are introduced,  higher regularity results for the solutions are explained, and  a linearization of the nonlinear operator around regular solutions is presented.

\subsection{Mathematical models}\label{sec:models}
 
 Nematic liquid crystal molecules are rod-shaped and exhibit a locally preferred orientational ordering pivotal for their  optical and electrical anisotropic properties. In mean field theory, the state of a liquid crystal is described by a probability distribution function $\rho_x(\p)$ of the molecules on the unit sphere $S^2=\{\p \in \mathbb{R}^3| \, \abs{\p}=1 \}$.   For each point $x$ in the material domain $\Omega\subset \mathbb{R}^3,$ and the associated probability density function $\rho_x$  of molecular orientation, the $Q$-tensor order parameter $Q(x)$ at $x \in \Omega$ is the matrix of normalized second order moments of $\rho_x$, defined by \cite{Maire-Saupe-1959, Gennes_Prost_book} 
   \begin{align*} 
   \Qvec(x):=\int_{S^2} \Big(\p \otimes \p - \frac{1}{3} \I \Big) \rho_x(\p) d \p.
\end{align*}
By definition, $\Qvec$ is a symmetric, traceless $3\times 3$ matrix and  its eigenvalues $\lambda_i(\Qvec)$  satisfy the following constraints: 
\begin{align}\label{physical constraint}
    -\frac{1}{3} \leq \lambda_i(\Qvec) \leq \frac{2}{3}, \text{ for } i=1,2,3, \sum_{i=1}^3 \lambda_i(\Qvec)=0.
\end{align}
We refer \eqref{physical constraint} as "physical constraints" and the range $\lambda_i \in ( -\frac{1}{3} ,  \frac{2}{3} )$ as the "physical regime".

 Landau-de Gennes (LDG) theory is the most general continuum theory for nematic liquid crystals. In LDG theory, the state of the nematic phase is described by the macroscopic $Q$-tensor order parameter, where  $Q\in  {S}:=\{Q\in \mathbb{R}^{3\times 3} |\, Q_{ij}= Q_{ji},Q_{ii}=0\}$, but its eigenvalues do not necessarily satisfy the physical constraints \eqref{physical constraint}. This framework abstracts away detailed information about  $\rho$ and intermolecular interactions. 
 
 We consider the LDG energy in the absence of surface energy and external fields, defined by
\begin{align}\label{general_LDG}
    \mathcal{F}[\Qvec]:=  \int_\Omega \mathcal{W}(\nabla Q) \dx + \int_\Omega \psi_B^{LDG}(\Qvec) \dx.
\end{align}
Here, the elastic energy density $\mathcal{W}(\bullet)$ is a function of the spatial derivatives of $Q$, and penalizes  spatial heterogeneity. This energy density is defined explicitly in Section \ref{Problem formulation}. The bulk energy density $\psi_B$ describes the isotropic-nematic phase transition. In LDG theory the bulk energy density is often truncated at the fourth order and defined by a polynomial,
 \begin{align}\label{LDG-bulk-density}
\psi_B^{LDG}(Q):=    -\frac{a}{2} \text{tr}(Q^2)-\frac{b}{3} \text{tr}(Q^3)+\frac{c}{2} (\text{tr}(Q^2))^2.
     \end{align}
    Here, $b,c>0$ are positive material dependent constants. The parameter $a:=\alpha (T^*-T)$ is a rescaled temperature, defined in terms of the characteristic supercooling temperature $T^*$ \cite{Gennes_Prost_book} and a material dependent constant $\alpha>0$. This study focuses on the low-temperature regime where $T<T^\ast$, and therefore $a>0$.

 Majumdar \cite{Majumdar_2010} observed that equilibrium configurations predicted by the LDG theory do not always satisfy the eigenvalue constraints in \eqref{physical constraint} in low-temperature regimes, sometimes resulting in physically unrealistic values. For example, 
 in MBBA liquid crystal material \cite{Majumdar_2010}, with constants $\alpha=0.42 \times 10^3 J/m^3 \, \degree C, b=0.64 \times 10^4 J/m^3 , c=0.35 \times 10^4 J/m^3 , T^* =45  \, \degree C,$ and a nematic-isotropic transition temperature $ T_c=46  \, \degree C,$ the scalar order parameter becomes non-physical within $2 \degree C$ of the transition temperature.
 To address this limitation, Ball and Majumdar \cite{Ball_Majumdar_2010} developed a continuum energy functional based on the Maier-Saupe energy, inspired by Katriel et al.~\cite{Katriel1986}. They introduced a singular potential for the bulk energy, denoted by $f$, which becomes unbounded as the eigenvalues approach  $-1/3$ or $2/3$, thereby enforcing the physicality constraint \eqref{physical constraint}.
 The bulk potential in \cite{Ball_Majumdar_2010}  is defined as 
 \begin{align}\label{BM-bulk-density}
     \psi_B^{BM}(Q):=\begin{cases}
       T f(Q) -\kappa  \text{tr}(Q^2) & \text{if } \lambda_i(\Qvec) \in (-\frac{1}{3},\frac{2}{3}), \, i=1,2,3,\\
        +\infty & \text{otherwise}
     \end{cases}
 \end{align}
where $T$ is  the absolute temperature  and  the  constant $\kappa >0$ is associated to the strength of the intermolecular interactions. The singular bulk potential induces mathematical challenges in both analytical and numerical studies due to the fact that the function $f$ is singular and does not have a closed form in $Q$.  Both the bulk potentials ($ \psi_B^{LDG}(\bullet)$ and $ \psi_B^{BM}(\bullet)$) are qualitatively similar in the sense that both predict a first order nematic-isotropic phase transition, and both are smooth functions in their effective domain. 
 The explicit definition of the map $f$ and its key analytical properties  are detailed in Section \ref{Problem formulation}. For analytical results  of the  $Q$-tensor energy with singular potential, we refer to  \cite{Bauman_Phillips2016, Geng_Tong_2020, Evans2016, Liu_Lin_Xu_2021, Wilkinson2015}.

 Besides the eigenvalue constraint issue, another analytical shortcoming in \eqref{general_LDG} should be noted when seeking a connection between the Oseen-Frank theory and the $Q$-tensor based LDG theory.
 This connection can be established by considering a general four elastic free energy in LDG theory. The most general elastic terms result in an energy that is unbounded from below for certain $Q$ outside the physical regime. This issue of eliminating unphysical behavior of the most general elastic terms  can be resolved using $\psi^{BM}_B(\bullet).$


The local or global minimizers of the constrained or unconstrained $Q$-tensor energies mentioned above correspond to the experimentally observable equilibrium configurations in nematic liquid crystals. In this paper, we focus on an anisotropic elastic energy with  nonzero elastic constants
 $L_1, L_2, L_3$. Under certain conditions on these elastic constants, the Euler-Lagrange equations for $Q$-tensor energies lead to a system of nonlinear elliptic coupled partial differential equations (PDEs) with nonlinearity in the lower order terms. We approximate the regular solutions of these nonlinear PDEs with non-homogeneous Dirichlet boundary conditions (BCs) using the conforming finite element method. The non-homogeneous BCs, or anchoring conditions, play a crucial role in determining the nematic director profiles on surfaces, which is essential for constructing many practical liquid crystal devices \cite{Tsakonas, Caimi2021}.

The analytical behavior of the three-dimensional LDG  $Q$-tensor energy in \eqref{general_LDG}, with isotropic elastic energy ($L_1\neq 0$, $L_2=0=L_3$), and its asymptotic analysis as the elastic constant $L_1\rightarrow 0$ is discussed in \cite{Majumdar_Zarnescu2010}. In the thin film limit, the two-dimensional reduced isotropic LDG model resembles a rescaled version of the celebrated Ginzburg-Landau model \cite{Bethuel1993,Pacard2000}. This model is very well-studied in  literature.
For some analytical results associated with  the anisotropic LDG $Q$-tensor  models, we refer to \cite{Golovaty2019, Golovaty2020}.

Apart from nematic liquid crystals considered here, the general LDG $Q$-tensor model in \eqref{general_LDG} also appears for smectic liquid crystals, coupled with a fourth-order equation for the scalar-valued smectic density variation \cite{Xia2023}, as well as in ferronematic systems \cite{FerroRMAMNN2021,Ferronematics_2D} such as colloidal suspensions of magnetic nanoparticles in a nematic host.

 \subsection{Problem formulation and analysis}\label{Problem formulation}

In this paper, we consider the following anisotropic elastic energy \cite{Ball_Majumdar_2010} for $\Qvec \in \mathcal{H}^1(\Omega),$
\begin{align*}
\mathcal{F}_E(\Qvec)=\int_\Omega  \mathcal{W}(\nabla Q) \dx \text{ with }   \mathcal{W}(\nabla Q):=\frac{1}{2} \big( L_1 \abs{\nabla \Qvec}^2 +L_2 \abs{\nabla \cdot\Qvec}^2 + L_3 (\nabla \Qvec)^T  \vdotsop \nabla \Qvec \big) ,
\end{align*}
where $L_1, L_2, L_3$ 
are material dependent constants,  and $\abs{\nabla \Qvec}^2:= \nabla \Qvec  \vdotsop    \nabla  \Qvec$, $\abs{\nabla \cdot\Qvec}^2:=(\nabla \cdot\Qvec)\cdot (\nabla \cdot \Qvec),$ and for $P,Q \in \mathcal{H}^1(\Omega),$ 
\begin{align*}
   \nabla \Qvec  \vdotsop    \nabla  \Pvec = (\partial_k Q_{ij}) (\partial_k P_{ij}), (\nabla \cdot\Qvec)\cdot (\nabla \cdot\Pvec) := (\partial_j Q_{ij}) (\partial_k P_{ik}),\, (\nabla \Qvec)^T  \vdotsop \nabla \Pvec:= (\partial_j Q_{ik}) (\partial_k P_{ij}) .
\end{align*}
The repeated indices are summed from $1$ to $3$.

 We consider the energy minimization problem,
\begin{align}\label{anisotropy: energy}
    \mathcal{F}[\Qvec]= \mathcal{F}_E(\Qvec)+\mathcal{F}_B(\Qvec),
\end{align}
subject to the non-homogeneous Dirichlet boundary condition $ \Qvec_b\in  H^\frac{3}{2}(\partial\Omega;  {S}).$ It involves the bulk energy, $\mathcal{F}_B(\Qvec):=\mathcal{F}^{LDG}_B(\Qvec):=\int_\Omega \psi_B^{LDG}(\Qvec) \dx$ for the standard LDG $Q$-tensor energy  \eqref{general_LDG}, and $\mathcal{F}_B(\Qvec):=\mathcal{F}^{BM}_B(\Qvec):=\int_\Omega \psi_B^{BM}(\Qvec) \dx$ for the $Q$-tensor energy with singular potential \eqref{BM-bulk-density}.
 \subsubsection*{BM potential and auxiliary  analytical results}
The Ball-Majumdar (BM) singular potential $f$ is defined by
\begin{equation} \label{entropy function}
 f(\Qvec)=  
 \begin{cases}
    \inf_{\rho \in \mathcal{A_\Qvec} }\int_{S^2} \rho(\p) \text{ln}\rho(\p) d\p & \text{if } \lambda_i(\Qvec) \in (-\frac{1}{3},\frac{2}{3}), \, i=1,2,3,\\
        +\infty & \text{otherwise}
    \end{cases}
\end{equation}
where the entropy term is minimized over all probability distributions $\rho$ that have a fixed normalized second moment $Q$, that is,  
\begin{align*}
   \mathcal{A_\Qvec}:=\{ \rho \in  \mathcal{P}| \, \Qvec =\int_{S^2} \big(\p \otimes \p - \frac{1}{3} \I \big) \rho(\p) d \p \} .
\end{align*}
Here $\rho \in \mathcal{P}:=\{\rho \in L^1(S^2; \mathbb{R})|\, \rho \geq 0, \int_{S^2} \rho(\p) d\p =1 \}$ is the equilibrium probability density distribution of the liquid crystal molecules.
For any $\Qvec \in  {S}$, such that $\lambda_i(\Qvec) \in ( -\frac{1}{3} ,  \frac{2}{3} )$ for $i=1,2,3,$ the set $\mathcal{A_\Qvec}$ is non-empty, and  the optimization problem in \eqref{entropy function} has  a unique minimizer in $ \mathcal{A_\Qvec}$  (see \cite[Lemma 1, Theorem 2]{Schimming2021}).

\medskip

\noindent We refer to \cite{Ball_Majumdar_2010,Feireisl2015} for the following analytic properties of the singular potential $f$:
\begin{enumerate}
    \item $f: {S}\rightarrow [-K, +\infty]$ is convex and lower semi-continuous, with $K\geq 0.$
    \item The domain of $f$, i.e.,
    \[\mathcal{D}[f]=\{ \Qvec \in  {S}|\, f( {\Qvec}) <+\infty \} ,  \]
    coincides with the (open, convex, and bounded) subset of $ {S}$ given by
    \[ \mathcal{Q}_{phy}:=\left\{\Qvec \in  {S}|\, \lambda_i(\Qvec )\in (-\frac{1}{3},\frac{2}{3}) \right\}.\]
    \item $f$ is smooth in $\mathcal{D}[f].$
\end{enumerate}
As $Q$ approaches the physical boundary, $\partial \mathcal{Q}_{phy},$ both $f$ and $\frac{\partial f}{\partial Q}$ tend to infinity. Blowup rates of the  singular potential and its gradient as $Q$ approaches $\partial \mathcal{Q}_{phy}$ are discussed in \cite{Ball2017, lu2021blowup}. In particular, there exist  constants $C_l ,C_u$ such that
\begin{align}\label{blow-up-f}
     C_l -    \frac{1}{2} \text{ln}\Big(\lambda_\text{min}(Q)+\frac{1}{3}\Big) \leq f(\Qvec) \leq C_u -   \text{ln}\Big(\lambda_\text{min}(Q)+\frac{1}{3}\Big).
\end{align}
 \subsubsection{Existence of minimizers}
Define the bilinear form $\mathcal{A}(\bullet,\bullet): \mathcal{H}^1(\Omega)\times \mathcal{H}^1(\Omega) \rightarrow \mathbb{R}$  associated to the elastic energy, $\mathcal{F}_E(\bullet),$ 
\begin{align*}
  &  \mathcal{A}(\Qvec,\Pvec):= \int_\Omega \big( L_1\nabla \Qvec  \vdotsop   \nabla \Pvec  +L_2 (\nabla \cdot\Qvec)\cdot (\nabla \cdot\Pvec) 
  + L_3 (\nabla \Qvec)^T  \vdotsop \nabla \Pvec \big)  \dx.
\end{align*}
The boundedness property of $\mathcal{A}(\bullet, \bullet)$ follows readily using the Cauchy-Schwarz inequality, and the coercivity follows under certain conditions on the elastic parameters as discussed below.
 \begin{lem}\label{coercivity-elstic-energy}\cite{Longa_1987}
    (i) The bilinear form $\mathcal{A}(\cdot,\cdot)$ is symmetric and bounded on $\mathcal{H}^1(\Omega)$, that is, $\mathcal{A}(P,R) \leq C_E \abs{P}_1 \abs{R}_1$, where the constant $C_E$ depends on the elasticity constants, $L_1,L_2$ and $L_3.$\\
    (ii) There exist $C_{q_1}, C_{q_2}$ such that $\mathcal{A}(P,P) \geq C_{q_1} \abs{P}^2_1$ for all $P\in \mathcal{H}^1(\Omega),$ and $\mathcal{A}(P,P) \geq C_{q_2} \norm{P}^2_1$ for all $P\in \mathcal{H}^1_0(\Omega),$ where 
    \begin{align}\label{cond-elas-const}
        0<L_1, -L_1< L_3< 2L_1, \text{ and } -\frac{3}{5} L_1 -\frac{1}{10} L_3 <L_2.
    \end{align}
    Here, the constants $C_{q_1}, C_{q_2}$ depend on $\Omega$, and   the elasticity constants, $L_1,L_2$ and $L_3.$
\end{lem}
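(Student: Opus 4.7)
For part (i), symmetry of $\mathcal{A}$ in each of its three integrand contributions follows by relabelling the dummy summation indices: the first two terms are manifestly symmetric in $(P,R)$, and for the third one uses the swap $j\leftrightarrow k$ (together with symmetry of $P,R$). Boundedness is term-by-term via Cauchy-Schwarz in the Frobenius inner product, which gives $\abs{\mathcal{A}(P,R)}\leq (\abs{L_1}+\abs{L_2}+\abs{L_3})\abs{P}_1\abs{R}_1$, so that one may take $C_E=\abs{L_1}+\abs{L_2}+\abs{L_3}$.

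For part (ii), the plan is to establish a pointwise algebraic inequality: under \eqref{cond-elas-const} there exists $C_{q_1}>0$ such that
\begin{align*}
L_1\abs{\nabla Q}^2+L_2\abs{\nabla\cdot Q}^2+L_3\,(\nabla Q)^T\vdotsop\nabla Q \;\geq\; C_{q_1}\abs{\nabla Q}^2
\end{align*}
pointwise for every symmetric, traceless $Q$-field. Integration over $\Omega$ then yields $\mathcal{A}(P,P)\geq C_{q_1}\abs{P}_1^2$, which is the first half of (ii). The $\mathcal{H}^1_0$-estimate follows by combining this with the Poincar\'e inequality $\norm{P}_{L^2(\Omega)}\leq C_\Omega\abs{P}_1$, giving $\mathcal{A}(P,P)\geq C_{q_2}\norm{P}_1^2$ for $P\in \mathcal{H}^1_0(\Omega)$.

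To prove the pointwise inequality, I would decompose the third-order tensor $A_{ij}^{k}:=\partial_k Q_{ij}$, which is symmetric and trace-free in $(i,j)$, into its $SO(3)$-irreducible pieces. Writing $v_i:=A_{ij}^{j}=(\nabla\cdot Q)_i$ for the divergence (the only nontrivial trace of $A$, since $Q$ is $(i,j)$-traceless), there is an orthogonal decomposition
\begin{align*}
A_{ij}^{k} \;=\; S_{ijk}+T_{ij}^{k}(v)+D_{ij}^{k},
\end{align*}
where $S$ is the totally symmetric trace-free part (7-dimensional), $T(v)$ is the explicit $v$-dependent tensor carrying the divergence (3-dimensional), and $D$ is the remaining mixed-symmetry component (5-dimensional), for a total of $15=7+3+5$ dimensions matching the constraint space. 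Substituting back, each of $\abs{\nabla Q}^2$, $\abs{\nabla\cdot Q}^2$, and $(\nabla Q)^T\vdotsop\nabla Q$ is an explicit linear combination of the invariants $\abs{S}^2$, $\abs{v}^2$, $\abs{D}^2$. The elastic quadratic form therefore diagonalizes as $\alpha_1\abs{S}^2+\alpha_2\abs{v}^2+\alpha_3\abs{D}^2$, with coefficients $\alpha_i$ affine in $(L_1,L_2,L_3)$. A direct computation shows that \eqref{cond-elas-const} is precisely the condition $\min_i\alpha_i>0$; the pointwise bound then follows with $C_{q_1}$ proportional to $\min_i\alpha_i$.

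The main obstacle is the tensorial bookkeeping of the decomposition step: verifying the pointwise orthogonality of $S$, $T(v)$, and $D$ and computing the three scalar invariants $\abs{\nabla Q}^2$, $\abs{\nabla\cdot Q}^2$, $(\nabla Q)^T\vdotsop\nabla Q$ in terms of $\abs{S}^2,\abs{v}^2,\abs{D}^2$ is elementary but tedious. Without this simplification one would face diagonalization of a large symmetric matrix on the $15$-dimensional constraint space; the irreducible decomposition is precisely what makes both the computation and the identification of the sharp thresholds in \eqref{cond-elas-const} tractable, recovering the classical analysis of Longa et al.
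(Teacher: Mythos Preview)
The paper does not supply its own proof of this lemma; it is stated with a citation to Longa et al.~\cite{Longa_1987} and used as a black box throughout. Your proposal is therefore not being compared against an in-paper argument but against the classical reference, and your sketch is indeed a faithful reconstruction of that analysis: the $SO(3)$-irreducible decomposition of $\partial_k Q_{ij}\in V_2\otimes V_1\cong V_3\oplus V_2\oplus V_1$ (dimensions $7+5+3$) is exactly the mechanism that block-diagonalizes the elastic quadratic form and yields the sharp constraints \eqref{cond-elas-const}. Part (i) is correct as written; the explicit constant $C_E=|L_1|+|L_2|+|L_3|$ is fine since each of the three contractions is bounded by $|\nabla P|\,|\nabla R|$ via Cauchy--Schwarz on the full index set.

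One minor comment on part (ii): you describe the plan and the structure of the computation but stop short of writing down the actual coefficients $\alpha_i$ and checking that their joint positivity is equivalent to \eqref{cond-elas-const}. Since this is the entire content of the lemma, in a self-contained proof you would need to carry out that linear algebra explicitly (or at least quote the resulting eigenvalues from \cite{Longa_1987}); as a proof \emph{proposal} the outline is sound, but the ``tedious bookkeeping'' you flag is not optional.
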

 Condition \eqref{coercivity-elstic-energy} corresponds to the ellipticity of the problem. It guarantees that the elastic  energy is bounded from below.
\subsubsection*{LDG bulk potential}
 We assume the non-homogeneous Dirichlet boundary condition $ \Qvec_b\in  {H}^\frac{3}{2}(\partial\Omega;S).$ Define the admissible set,  $\mathcal{X}_{LDG}(\Qvec_b ):=\{ \Qvec \in \mathcal{H}^1(\Omega)|\,\, \Qvec =  \Qvec_b \text{ on }{\partial\Omega}\}$. We study the minimization problem 
\[ \min_{\Qvec \in \mathcal{X}_{LDG}(\Qvec_b )} \mathcal{F}_{LDG}[\Qvec], \text{ where } \mathcal{F}_{LDG}[\Qvec]= \mathcal{F}_E(\Qvec)+\mathcal{F}^{LDG}_B(\Qvec). \]
Recall $\mathcal{F}^{LDG}_B(\Qvec):=\int_\Omega \psi_B^{LDG}(Q) \dx$ from \eqref{anisotropy: energy}  and the definition of  $\psi_B^{LDG}(\bullet)$ from \eqref{LDG-bulk-density}. 
The bulk density $\psi_B^{LDG}(\bullet)$ satisfies the growth condition \cite[Corollary 4.4]{Gartland1998}, $ \psi_B^{LDG}(Q) \geq C +C \abs{Q}^2$ for all $Q\in S $ for some constant $C >0.$
Lemma \ref{coercivity-elstic-energy} implies that $ \mathcal{F}_E(\bullet) $ is coercive and convex in $\nabla Q.$ 
Therefore, the coercivity of $\mathcal{F}_{LDG}$  in $\nabla Q$ follows readily from Lemma \ref{coercivity-elstic-energy} and from the above mentioned growth condition of the bulk term. Thus the existence of a minimizer is guaranteed by a straightforward application of  the direct method in the Calculus of Variations.

 \subsubsection*{BM potential}
 For the case of bulk energy associated to the singular potential, we assume that the non-homogeneous Dirichlet boundary condition, $ \Qvec_b\in  {H}^\frac{3}{2}(\partial\Omega;S),$  satisfies an additional condition that $\Qvec_b$ is "strictly physical",   that is,  $\Qvec_b$ is physically realistic in the sense that $-\frac{1}{3}+\widetilde{\epsilon}_0< \lambda_i(\Qvec_b)< \frac{2}{3}-\widetilde{\epsilon}_0$ for $i=1,2,3,$ for a some small parameter $\widetilde{\epsilon}_0 >0,$ or equivalently $\psi_B(\Qvec_b) <\infty.$ For small $\widetilde{\epsilon}>0,$ we define  $\mathcal{Q}_{phy}(\widetilde{\epsilon}):=\{Q\in \mathcal{D}[f]| \,\, -\frac{1}{3}+\widetilde{\epsilon}< \lambda_i(\Qvec)< \frac{2}{3}-\widetilde{\epsilon} \text{ for }i=1,2,3\}$ and  the admissible set  $\mathcal{X}_{BM}(\Qvec_b,\widetilde{\epsilon} ):=\{ \Qvec \in \mathcal{H}^1(\Omega) \cap \mathcal{Q}_{phy}(\widetilde{\epsilon})|\,\, \Qvec =  \Qvec_b \text{ on }{\partial\Omega}\}$.  
 We study the minimization problem 
\[ \min_{\Qvec \in \mathcal{X}_{BM}(\Qvec_b,\widetilde{\epsilon} )} \mathcal{F}_{BM}[\Qvec]:= \mathcal{F}_E(\Qvec)+\mathcal{F}^{BM}_B(\Qvec). \]
Recall $\mathcal{F}^{BM}_B(\Qvec)=\int_\Omega \psi_B^{BM}(Q) \dx  $ from \eqref{anisotropy: energy}, and the definition of  $ \psi_B^{BM}(\bullet) $ from  \eqref{BM-bulk-density}.
For any $\Qvec  \in \mathcal{X}_{BM}(\Qvec_b,\widetilde{\epsilon} ),  $ Lemma \ref{coercivity-elstic-energy}  and the lower bound of $f$ in \eqref{blow-up-f} imply the existence of minimizers.

  \subsubsection*{Physicality of minimizers}
For $\Tilde{Q},$  a minimizer of $\mathcal{F}_{BM}[\cdot]$, the singular potential function in \eqref{entropy function} $f(\Tilde{Q}) < \infty$ almost everywhere. Therefore, the eigenvalues of $\Tilde{Q}$ lies within the physical regime almost everywhere. In fact, for the one constant  elastic free energy ($L_1>0, L_2=L_3=0$) given by
\begin{align}\label{one-constant-elastic-energy}
    \mathcal{F}_{BM}[\Qvec]:=\int_\Omega L_1 \abs{\nabla \Qvec}^2 \dx+ \int_\Omega \psi_B(\Qvec) \dx,
\end{align} 
the authors in \cite{Ball_Majumdar_2010} establish the "strict physicality" of the global minimizers of \eqref{one-constant-elastic-energy} via a maximum principle:
\begin{thm}\cite[Theorem 6]{Ball_Majumdar_2010} \label{physical regime of global minimizer}
    Let $\Qvec^*$ be a global minimizer of $ \mathcal{F}_{BM}[\bullet]$ subject to the fixed boundary condition, $\Qvec=\Qvec_b$ on $\partial \Omega, $ where the boundary condition $\Qvec_b$ is physically realistic in the sense that $-\frac{1}{3}+\widetilde{\epsilon}_0\leq  \lambda_i(\Qvec_b) \leq \frac{2}{3}-\widetilde{\epsilon}_0$ for $i=1,2,3,$ for some $\widetilde{\epsilon}_0 >0,$ or equivalently $\psi_B(\Qvec_b)$ is bounded. Then the minimizer $\Qvec^*$ is strictly physical, that is,  $\Qvec^* \in \mathcal{X}_{BM}(\Qvec_b,\widetilde{\epsilon} )$ for some $\widetilde{\epsilon} >0$.
\end{thm}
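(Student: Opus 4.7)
The plan is to prove strict physicality by combining the Euler--Lagrange equation for the minimizer $Q^{*}$ with a maximum principle applied to the extremal eigenvalues. Since $\mathcal{F}_{BM}[Q^{*}]$ is finite, $Q^{*}(x)\in\mathcal{Q}_{phy}$ for almost every $x\in\Omega$, so $f$ is smooth at $Q^{*}(x)$ there. Computing the first variation of $\mathcal{F}_{BM}$ over admissible perturbations (respecting $Q^{\top}=Q$ and $\text{tr}\,Q=0$) yields
\[
2L_{1}\Delta Q^{*} \;=\; T\,\partial_{Q}f(Q^{*}) \;-\; 2\kappa Q^{*} \;+\; \mu(x)\,\I ,
\]
where $\mu(x)=\tfrac{T}{3}\text{tr}\,\partial_{Q}f(Q^{*})$ is the Lagrange multiplier enforcing tracelessness. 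Elliptic bootstrap, exploiting smoothness of $f$ inside $\mathcal{Q}_{phy}$, upgrades $Q^{*}$ to $C^{\infty}$ on the open set $U:=\{x\in\Omega:\,Q^{*}(x)\in\mathcal{Q}_{phy}\}$; the task is then to show $U=\Omega$ with uniform distance to $\partial\mathcal{Q}_{phy}$.

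Next, I would monitor the distance of the spectrum of $Q^{*}$ to $\partial\mathcal{Q}_{phy}$ through the scalar $\phi(x):=\lambda_{\min}(Q^{*}(x))+\tfrac13$; by the lower bound in \eqref{blow-up-f}, $-\log\phi$ controls $f(Q^{*})$ from above. At any interior minimum point $x_{0}$ of $\phi$, select a unit eigenvector $v$ of $Q^{*}(x_{0})$ corresponding to $\lambda_{\min}$ and test the Euler--Lagrange equation against $v\otimes v$. By second-order perturbation theory, $v\cdot\Delta Q^{*}\,v\ge \Delta\phi(x_{0})\ge 0$, while the spectral decomposition of $\partial_{Q}f$ together with \eqref{blow-up-f} forces $v\cdot\partial_{Q}f(Q^{*})\,v\to-\infty$ as $\phi(x_{0})\to 0$. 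The remaining terms $-2\kappa\,v\cdot Q^{*}v+\mu(x_{0})$ stay uniformly bounded in terms of $\|Q^{*}\|_{\infty}$, so the equation becomes inconsistent once $\phi(x_{0})$ falls below a universal threshold $\widetilde{\epsilon}>0$ determined by $T$, $\kappa$, $L_{1}$ and $\|Q^{*}\|_{\infty}$.

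Combined with the strict physicality of the boundary datum, which gives $\phi|_{\partial\Omega}\ge\widetilde{\epsilon}_{0}$, this rules out $\phi$ attaining values below $\min(\widetilde{\epsilon}_{0},\widetilde{\epsilon})$ anywhere in $\overline{\Omega}$, bounding $\lambda_{\min}(Q^{*})$ strictly above $-\tfrac13$. The symmetric argument for $\tfrac23-\lambda_{\max}(Q^{*})$ handles the upper end of the spectrum, and together they give $Q^{*}\in\mathcal{X}_{BM}(Q_{b},\widetilde{\epsilon})$ for a uniform $\widetilde{\epsilon}>0$.

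The main obstacle is that $\lambda_{\min}(Q^{*}(x))$ is only Lipschitz in $x$ and fails to be differentiable on the coalescence set where eigenvalues meet, so the pointwise testing against $v\otimes v$ needs a careful justification. A clean workaround is to work with a smooth spectral invariant in place of $\phi$, such as $\det(Q^{*}+\tfrac13\I)\ge 0$, whose zero set is exactly $\{\lambda_{\min}=-\tfrac13\}$, or to formulate the above differential inequality in the viscosity sense with Lipschitz test functions; in either formulation one must still verify that the blow-up rate of $\partial_{Q}f$ encoded by \eqref{blow-up-f} produces a strictly coercive driving term that dominates the bounded contributions from $-2\kappa Q^{*}+\mu\,\I$.
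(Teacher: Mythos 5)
The paper does not prove this statement; it imports it directly as \cite[Theorem~6]{Ball_Majumdar_2010}, so there is no in-text argument to match against. Judged on its own merits, your plan has the right qualitative intuition (the blow-up of $\partial_Q f$ near $\partial\mathcal{Q}_{phy}$ should repel the minimizer from the boundary of the admissible set), but as written it is circular. You test a pointwise Euler--Lagrange equation against $v\otimes v$ at an interior minimum of $\phi=\lambda_{\min}(Q^*)+\tfrac13$. For a global $H^1$ minimizer one only knows $\mathcal{F}_{BM}[Q^*]<\infty$, which yields $Q^*(x)\in\mathcal{Q}_{phy}$ a.e.; to differentiate $t\mapsto\int_\Omega f(Q^*+tP)\dx$ at $t=0$ one already needs $\partial_Q f(Q^*)\in L^1_{\rm loc}$, and the blow-up estimate \eqref{blow-up-f} controls $f$, not $\partial_Q f$, which grows like $(\lambda_{\min}+\tfrac13)^{-1}$ and is not integrable merely because its logarithm is. Furthermore, in $\Omega\subset\mathbb{R}^3$, $H^1$ functions are not continuous, so $\phi$ need not attain an interior minimum, the second-order perturbation inequality $v\cdot\Delta Q^* v\ge\Delta\phi(x_0)$ has no pointwise meaning, and the elliptic bootstrap you invoke on $U$ requires precisely the uniform distance to $\partial\mathcal{Q}_{phy}$ you are trying to establish. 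In short, the strong Euler--Lagrange equation is only available \emph{after} strict physicality is known, so assuming it presupposes the conclusion.

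The cited proof sidesteps this by exploiting global minimality directly rather than any Euler--Lagrange information: one constructs a competitor --- a truncation of $Q^*$ that stays strictly inside $\mathcal{Q}_{phy}$, matches the strictly physical boundary data, does not increase the Dirichlet energy, and strictly decreases the singular bulk term wherever the eigenvalues approach $-\tfrac13$ or $\tfrac23$ --- and contradicts minimality. No regularity of $Q^*$ beyond $H^1$ and no integrability of $\partial_Q f(Q^*)$ is needed. A useful sanity check on your route: were it valid as stated, it would establish strict physicality of \emph{every} classical solution of the Euler--Lagrange system, a strictly stronger result than \cite[Theorem~6]{Ball_Majumdar_2010}, and one which the paper itself notes is open outside the one-constant, global-minimizer setting. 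The non-differentiability of $\lambda_{\min}$, which you flag, is real but secondary; the missing integrability and continuity of $Q^*$, and hence the very validity of the strong EL equation, are the gaps to close first.
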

 \noindent Corresponding results for local minimizers are discussed in \cite{Bauman_Phillips2016, Evans2016}.

\begin{rem}[Limitations for elastic anisotropy]
The strict physicality (eigenvalue constraint) of the local/global minimizers for the case of elastic anisotropy remains an open problem.  In fact, in \cite{Ball2017}, J.~M.~Ball suggests that one might expect the strict physicality of local minimizers to be true in general, because otherwise the integrand will be unbounded in a neighbourhood of some point of $\Omega$. 
\end{rem}
Throughout this paper, we assume the "strict physicality" of the local/global minimizers of $\mathcal{F}_{BM}[\bullet].$

\subsubsection{Weak formulation and higher regularity}\label{higher regularity}
The Euler-Lagrange equation associated to the energy minimization problem \eqref{anisotropy: energy}  is given by $D\mathcal{F}[\Qvec]=0,$ that is the PDE, 
\begin{align}\label{nonlinear pde}
 \dual{N(\Qvec),\Pvec} :=  \mathcal{A}(\Qvec,\Pvec)+\mathcal{B}(\Qvec,\Pvec)
 =0 \text{ for all } \Pvec \in  \mathcal{H}^1_0(\Omega).
\end{align}
For (a) the LDG bulk potential $\mathcal{B}(\bullet,\bullet):= \mathcal{B}_{LDG}(\bullet,\bullet)=D\mathcal{F}^{LDG}_B(\bullet)$, where
\begin{align*}
     \mathcal{B}_{LDG}(\Qvec,\Pvec):= \int_\Omega  \Big(-a \text{tr}(QP) -  b \text{tr}(Q^2  P)+ c \abs{Q}_{F}^2 \text{tr}(QP) \Big) \dx,
\end{align*}
and for (b) the BM potential, $\mathcal{B}(\bullet,\bullet):= \mathcal{B}_{BM}(\bullet,\bullet)=D\mathcal{F}^{BM}_B(\bullet)$, where 
\begin{align*}
     \mathcal{B}_{BM}(\Qvec,\Pvec):=   \frac{T}{2}\int_\Omega \frac{\partial f (\Qvec)}{\partial \Qvec }: \Pvec \dx -\kappa\int_\Omega   \Qvec:\Pvec \dx  .
\end{align*}

In the following we use that under the ellipticity assumption \eqref{cond-elas-const} the linear problem   $\mathcal{A}(\Qvec,\Pvec) = (G ,\Pvec)_{L^2(\Omega)}$ admits a solution $\Qvec \in \mathcal{H}^2(\Omega)$ whenever $G \in L^2(\Omega)$. This satisfied in  bounded domains $\Omega \subset \mathbb{R}^3$ with a $C^2$ boundary by Section 4.3.2 in \cite{giaquinta2013introduction} and for the  specific bilinear form $\mathcal{A}$ in open, bounded and convex domains $\Omega \subset \mathbb{R}^3$ by Section 6 in \cite{Gartland1998}.

  \subsubsection*{Higher regularity }
(a) (LDG bulk potential). The bulk density $\psi_B^{LDG}(\bullet)$ in \eqref{LDG-bulk-density} is continuously differentiable in $S$, and satisfies the  growth condition $D\psi_B^{LDG}(Q) \leq C(1+\abs{Q}^3)$ for all $Q\in S,$ and for some $C>0.$ This and the Sobolev embedding $ \mathcal{H}^1(\Omega) \hookrightarrow L^6(\Omega;S)$ for $\Omega \subset \mathbb{R}^3$, imply that $D\mathcal{F}^{LDG}_B(\Qvec) \in \mathcal{L}(L^2(\Omega,  {S}),\mathbb{R}).$ Therefore, for a given non-homogeneous Dirichlet boundary condition $\Qvec_b\in  H^\frac{3}{2}(\partial\Omega;  {S})$, the elliptic regularity of the linear problem in $\Omega \subset \mathbb{R}^3$ and a bootstrapping argument yield that the solution  $Q\in \mathcal{H}^1(\Omega)$ of \eqref{nonlinear pde} (associated to $\mathcal{B}(\bullet,\bullet)= \mathcal{B}_{LDG}(\bullet,\bullet)$) is of higher regularity, $Q\in \mathcal{H}^2(\Omega)$, under the condition \eqref{cond-elas-const} on the elastic parameters $L_1, L_2, L_3$.

\medskip

\noindent (b) (BM potential). 
In the case of the singular potential and strictly physical boundary data $ \Qvec_b\in  H^\frac{3}{2}(\partial\Omega;  {S}) \cap \mathcal{D}[f]$ 
 with $-\frac{1}{3}+\widetilde{\epsilon}_0< \lambda_i(\Qvec_b)< \frac{2}{3}-\widetilde{\epsilon}_0$ for $i=1,2,3,$  the solution $\Qvec$ of \eqref{nonlinear pde} belongs to  $\mathcal{X}_{BM}(\Qvec_b,\widetilde{\epsilon} )$ for some $\widetilde{\epsilon} >0.$ For the one constant elastic energy in \eqref{one-constant-elastic-energy}, 
 Theorem \ref{physical regime of global minimizer} justifies the existence of such  $\widetilde{\epsilon} >0.$ 
 Since $f$ is $C^\infty$ in $\mathcal{D}[f]$ and $ \mathcal{H}^1(\Omega) \hookrightarrow L^2(\Omega;S)$, the Hölder's  inequality implies that $D\mathcal{F}^{BM}_B(Q) \in \mathcal{L}(L^2(\Omega,  {S}), \mathbb{R} ).$ This and the elliptic regularity of the linear problem in $\Omega$  imply that the solution $\Qvec$ of \eqref{nonlinear pde} belongs to $\mathcal{H}^2(\Omega).$



\medskip
\subsection*{Linearization around regular solutions}
In this paper we approximate $\mathcal{H}^2(\Omega)$-regular solutions of \eqref{nonlinear pde}. 
For such $\Qvec$,   the nonlinear map $N$ is differentiable at $\Qvec$,  and the Frech\'et derivative $DN(\Qvec)$ is isomorphism of $\mathcal{H}_0^1(\Omega)$ onto $(\mathcal{H}_0^1(\Omega))^*.$ That is,
\begin{align}\label{continuous-inf-sup}
    0<\beta :=
     \inf_{\substack{R\in \mathcal{H}^1_0(\Omega) \\  \norm{R}_1=1}} \sup_{\substack{ {P}\in \mathcal{H}^1_0(\Omega) \\  \norm{ {P}}_1=1}} (\mathcal{A}(R,\Pvec)+ \mathcal{B}_L(R,\Pvec)),
\end{align}
where 
$\mathcal{B}_L(R,\Pvec)$ is the linearization of  $\mathcal{B}(\bullet)$ with respect to $Q$: for (a) the LDG bulk potential, $\mathcal{B}_L(R,P):=D\mathcal{B}_{LDG}(Q; R,P)$ with
\begin{align}\label{LDG-linearization}
   D\mathcal{B}_{LDG}(Q; R,P):=\int_\Omega \big(-a \text{tr}(RP)  -  2b \text{tr}(QRP) + c (\text{tr}(Q^2) \text{tr}(RP) +2\text{tr}(QR) \text{tr}(QP)) \big)\dx.
\end{align}
and for  (b) the BM potential, $\mathcal{B}_L(R,P):=D\mathcal{B}_{BM}(Q; R,P)$ with
\begin{align}\label{BM-linearization}
    &D\mathcal{B}_{BM}(Q; R,P):= \frac{T}{2} \int_\Omega R:  \frac{\partial^2  f(\Qvec)}{\partial \Qvec^2}:\Pvec  \dx -\kappa\int_\Omega   R:\Pvec  \dx  ,  \\&
    \int_\Omega R:  \frac{\partial^2  f(\Qvec)}{\partial^2 \Qvec}:\Pvec  \dx = \int_\Omega \sum_{i,j,k,l} R_{ij} \frac{\partial^2  f (\Qvec)}{\partial \Qvec_{ij} \Qvec_{kl} } \Pvec_{kl}  \dx. \notag
\end{align}
For the BM potential, we  look for $\mathcal{H}^2(\Omega)$-regular solutions in the admissible set $\mathcal{X}_{BM}(\Qvec_b,\widetilde{\epsilon} ).$
\section{Finite element approximation}\label{FEM-approximation}
This section introduces the discrete formulation of the PDE \eqref{nonlinear pde}, and discusses the well-posedness of a discrete linearized system of equations.

Let $\mathcal{T}$ be a shape regular triangulation  of $\Omega \subset \mathbb{R}^3$ into  closed 3-simplices in the sense of \cite{ciarlet}. 
Let  $P_0(\mathcal{T}):=\{p\in L^\infty(\Omega)|\,p|_T \in P_k(T) \text{  for all } T\in \mathcal{T}  \}$ denote the space of piecewise polynomials,  where  $P_k(T)$ be the space of polynomials on $T$ of degree at most $k\in \mathbb{N}_0.$ 
	The maximal mesh-size in the triangulation $\mathcal{T}$ is denoted by $h:=\max h_{\mathcal{T} }$, where $h_{\mathcal{T} }\in P_0(\mathcal{T})$ with $h_{\mathcal{T}}|_T:= h_T $, where  $h_T:=\text{diam}(T)$ denotes the diameter of each simplex $T\in \mathcal{T}.$ 
 Let $\mathbb{T}(\delta)$ denote the non-empty subset of all such triangulations $\mathcal{T} $ with maximal mesh size smaller than or equal to a positive constant $\delta.$ 
 
Let $\{E_i\}_{i=1}^5$  be an orthonormal basis for $ {S}$. 
Then  any $\Qvec \in  {S}$ has a unique representation $\Qvec=\sum_{i=1}^5q_i E_i$ with $q_i=\text{tr}(\Qvec E_i).$ 
 Define $X_\C:=\{v\in C^0(\Omega; \mathbb{R})|\, v|_T\in P_1(T) \text{ for all }T\in \mathcal{T}\}.$ 
The discrete spaces for the approximation of matrices in $ {S}$ are defined as
\begin{align*}
   &  {S}_\C:=\left\{\Pvec \in C^0(\Omega;  {S})|\, \Pvec=\sum_{i=1}^5 p_{i,\C} E_i, \, p_{i,\C} \in  X_\C, 1\leq i \leq 5  \right\} \subset  {S}, \text{ and }  {S}_\C^0:=\left\{\Pvec \in  {S}_\C |\, \Pvec|_{\partial\Omega
    }=\mathbf{0}  \right\}.
\end{align*}

The discrete problem seeks $Q_\C \in  {S}_\C$ with boundary data $Q_\C|_{\partial \Omega}:=\I_\C Q_b$ obtained by conforming interpolation, such that
\begin{align}\label{discrete problem}
   \dual{N(\Qvec_\C),\Pvec_\C} := \mathcal{A}(Q_\C, P_\C)+\mathcal{B}(Q_\C, P_\C) = 0 \text{ for all }P_\C \in   {S}_\C^0.
\end{align}

The next lemma addresses the physicality of the conforming interpolation operator crucial for the analysis.
\begin{lem}[interpolation estimates]\label{Interpolation estimateConforming}
	For any $\Xi \!\in\! \mathcal{H}^2(\Omega)$ there  exists 
  ${\I_\C \Xi } \in   {S}_\C$  such that  (i) $	\norm{\Xi-{\rm{I}}_{\C}\Xi }_{\mathcal{H}^l(T)}  \leq C_I  h_T^{{2}-l}  \norm{\Xi}_{\mathcal{H}^{2}(T)}	$ for all $T \in \mathcal{T}$ with $l=0,1.$ (ii) Furthermore, for  $\Qvec \in \mathcal{Q}_{phy}(\widetilde{\epsilon}),$ there exists $\delta_0>0$  such that for any $\mathcal{T}\in \mathbb{T}(\delta)$ with $0<\delta \leq \delta_0$, the interpolation ${\I_\C \Qvec}$ belongs to the physical regime $\mathcal{Q}_{phy}(\widetilde{\epsilon}_1)$ for some $\widetilde{\epsilon}_1>0$.  
\end{lem}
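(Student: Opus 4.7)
The plan is to construct $\I_\C$ as the componentwise nodal Lagrange interpolation into the product space $ {S}_\C$ and to derive both conclusions from classical finite-element approximation theory combined with Weyl's perturbation inequality for symmetric matrices.

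For part (i), I expand $\Xi = \sum_{i=1}^5 \xi_i E_i$ in the given orthonormal basis of $S$, with scalar coefficients $\xi_i = \text{tr}(\Xi E_i) \in H^2(\Omega)$. Since $\Omega \subset \R^3$, the Sobolev embedding $H^2(\Omega) \hookrightarrow C^0(\overline{\Omega})$ ensures that the nodal Lagrange interpolant $\I_\C \xi_i \in X_\C$ is well-defined on every simplex, so I set $\I_\C \Xi := \sum_{i=1}^5 (\I_\C \xi_i) E_i \in  {S}_\C$. The local $\mathcal{H}^l$-error bound for $l = 0, 1$ is then the classical Bramble-Hilbert estimate for $P_1$ Lagrange interpolation on a 3-simplex, obtained by reduction to a reference element followed by the standard scaling argument; summation over the five scalar components, using the orthonormality of $\{E_i\}$, yields (i) with $C_I$ absorbing all basis-dependent constants.

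For part (ii), I first upgrade (i) to a uniform estimate. Applying Sobolev embedding on the reference simplex together with the scaling argument (here again the 3d embedding $H^2 \hookrightarrow C^0$ is decisive), I obtain
\[
\|\Xi - \I_\C \Xi\|_{L^\infty(\Omega)} \leq C\, h^{1/2}\, \|\Xi\|_{\mathcal{H}^2(\Omega)},
\]
which I apply to $\Xi = \Qvec$. Weyl's perturbation inequality for symmetric matrices, $|\lambda_i(A) - \lambda_i(B)| \leq |A - B|_F$, then gives the pointwise bound
\[
\bigl|\lambda_i(\I_\C \Qvec(x)) - \lambda_i(\Qvec(x))\bigr| \leq |\I_\C \Qvec(x) - \Qvec(x)|_F \leq \|\Qvec - \I_\C \Qvec\|_{L^\infty(\Omega)}
\]
for every $x \in \Omega$ and $i = 1,2,3$. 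Choosing $\delta_0 > 0$ so small that $C\, \delta_0^{1/2}\, \|\Qvec\|_{\mathcal{H}^2(\Omega)} \leq \widetilde{\epsilon}/2$, it follows that for every $\mathcal{T} \in \bT(\delta)$ with $\delta \leq \delta_0$ each eigenvalue of $\I_\C \Qvec(x)$ lies within $\widetilde{\epsilon}/2$ of the corresponding eigenvalue of $\Qvec(x) \in \mathcal{Q}_{phy}(\widetilde{\epsilon})$. Setting $\widetilde{\epsilon}_1 := \widetilde{\epsilon}/2$ proves $\I_\C \Qvec \in \mathcal{Q}_{phy}(\widetilde{\epsilon}_1)$.

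The main obstacle is securing the \emph{pointwise} convergence $\|\Qvec - \I_\C \Qvec\|_{L^\infty(\Omega)} \to 0$, since the physicality constraint is pointwise in nature and is not implied by convergence in the energy norm alone. This hinges entirely on the three-dimensional Sobolev embedding $H^2 \hookrightarrow C^0$ and the fact that $\Qvec$ is assumed to have $\mathcal{H}^2$-regularity; once the uniform interpolation estimate is available, the remainder is a direct application of Weyl's inequality together with the openness of $\mathcal{Q}_{phy}(\widetilde{\epsilon})$ inside $S$.
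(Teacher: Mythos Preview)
Your proof is correct and follows essentially the same route as the paper: componentwise nodal interpolation with classical Bramble--Hilbert estimates for part (i), and the $L^\infty$ interpolation bound $\|\Qvec-\I_\C\Qvec\|_{L^\infty}\lesssim h^{1/2}\|\Qvec\|_{\mathcal{H}^2}$ combined with Weyl's eigenvalue perturbation inequality for part (ii). The only cosmetic differences are that the paper takes $\widetilde{\epsilon}_1=\widetilde{\epsilon}-C_I h^{1/2}\|\Qvec\|_{\mathcal{H}^2}$ and $\delta_0=\widetilde{\epsilon}^2/(C_I^2\|\Qvec\|_{\mathcal{H}^2}^2)$, whereas you fix $\widetilde{\epsilon}_1=\widetilde{\epsilon}/2$; both choices work.
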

 \begin{proof}[Proof of (i)]
     For any scalar variable $\xi \!\in\! {H}^{2}(\Omega; \mathbb{R}) $, the existence of the conforming interpolant ${\I_\C \xi} \in P_1(\mathcal{T}) \cap {H}^1(\Omega; \mathbb{R}) $  with the interpolation estimates in (i) is well-known in the literature, and we refer to  \cite{brenner,Ern_Guermond2004_bookFEM}.  Let $\Xi:=\sum_{i=1}^5 \xi_i E_i $.
  It is trivial to see   from  the vector representation $\Xi:=(\xi_1, \xi_2, \xi_3, \xi_4, \xi_5)$ that
     the conforming interpolation ${\I_\C \Xi}:=(\I_\C \xi_1,\I_\C \xi_2,\I_\C \xi_3,\I_\C \xi_4, \I_\C \xi_5) \in (P_1(\mathcal{T}) \cap {H}^{1}(\Omega; \mathbb{R}) )^5$  satisfies the property (i).
 \end{proof}
  \begin{proof}[Proof of (ii)]
For  $\Qvec \in \mathbb{R}^{3 \times 3},$ Weyl's inequality \cite[Section III.2]{1997_Bhatia_book} for the eigenvalues of a symmetric matrix provide
\begin{align*}
   \sup_j \inf_i \abs{ \lambda_j(\Qvec(x))- \lambda_i(\I_\C \Qvec(x))} \leq  \abs{\Qvec(x) - \I_\C \Qvec(x)}_{F} \text{ for all }x\in \Omega, \color{blue}\text{ and }i,j=1, 2,  3.
\end{align*}
This and the interpolation estimate $\norm{\Qvec-{\rm{I}}_{\C}\Qvec}_{L^\infty(\Omega;S) } \leq C_I  h^{1/2}  \norm{\Qvec}_{\mathcal{H}^{2}(\Omega)}	$ leads to $\abs{ \lambda_j(\Qvec(x))- \lambda_i(\I_\C \Qvec(x))} \leq \epsilon_1,$
where $0<\epsilon_1 =C_I h^{1/2}  \norm{\Qvec}_{\mathcal{H}^{2}(\Omega)}.$ 
Since $\Qvec \in \mathcal{Q}_{phy}(\widetilde{\epsilon}),$ 
we obtain $-1/3 +(\widetilde{\epsilon} - \epsilon_1) \leq \lambda_i(\I_\C \Qvec) \leq 2/3 -(\widetilde{\epsilon} - \epsilon_1) $ for $i=1,2,3.$   
Therefore, for $\delta_0:=\widetilde{\epsilon}^2/ C_I^2  \norm{\Qvec}_{\mathcal{H}^{2}(\Omega)}^2$ and  any $\mathcal{T} \in \mathbb{T}(\delta)$ with $0< \delta < \delta_0$, the eigenvalues $\lambda_i(\I_\C \Qvec) \in  ( -\frac{1}{3}+\widetilde{\epsilon}_1,  \frac{2}{3}-\widetilde{\epsilon}_1 )$ for $i=1,2,3$ with $\widetilde{\epsilon}_1:= \widetilde{\epsilon} - \epsilon_1.$ 
\end{proof}
\subsection{Well-posedness of the discrete linear problem}
\begin{thm}\label{thm: dis-inf-sup} For suitably restricted values of  the elastic constants $L_1, L_2, L_3$ as $  0<L_1, -L_1< L_3< 2L_1, \text{ and } -\frac{3}{5} L_1 -\frac{1}{10} L_3 <L_2,$
there exists some $\delta_1 >0$  such that, any $\mathcal{T} \in \mathbb{T}(\delta)$ with $0< \delta \leq \delta_1$, and a regular solution $\Qvec \in \mathcal{H}^2(\Omega)$ of \eqref{nonlinear pde} satisfy
    \begin{align*}
   \beta_1<\beta_h :=
     \min_{\substack{R_\C \in  {S}_\C^0 \\  \norm{R_\C}_1=1}} \max_{\substack{P_\C\in  {S}_\C^0 \\  \norm{P_\C}_1=1}} (\mathcal{A}(R_C,\Pvec_C)+ \mathcal{B}_L(R_C,\Pvec_C)),
\end{align*}
where $\beta_1:= (\beta  C_{q_1} / C_E) - (C_E + C_{q_1}) C_I C_{\rm reg} C_Q  \delta_1$ with $$C_Q:=\begin{cases}
(a+2b\norm{Q}_{L^\infty(\Omega)}+3c \norm{Q}_{L^\infty(\Omega)}^2) & \text{for LDG bulk potential},\\
    \Big(\frac{T}{2}  \Big|\Big|\frac{\partial^2  f (\Qvec)}{\partial\Qvec^2}\Big|\Big|_{L^\infty(\Omega)}
    +\kappa \Big) & \text{for BM bulk potential}.
\end{cases} $$ Recall $\beta $ from \eqref{continuous-inf-sup}, $C_E, C_{q_1}  $ from Lemma \ref{coercivity-elstic-energy}, $C_I$ from Lemma \ref{Interpolation estimateConforming}, and  $ C_{\rm reg}$ is a constant related to the $\mathcal{H}^2$ regularity of the solution.
\end{thm}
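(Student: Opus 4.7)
The plan is to use a Schatz-type discrete inf-sup argument that lifts the continuous inf-sup \eqref{continuous-inf-sup} of $a(\cdot,\cdot):=\mathcal{A}(\cdot,\cdot)+\mathcal{B}_L(\cdot,\cdot)$ to the conforming space $ {S}_\C^0$. The idea is to rewrite $a(R_\C,\cdot)$ as a pure $\mathcal{A}$-form acting on an enhanced variable $P:=R_\C+\tilde R$ whose correction $\tilde R$ is $\mathcal{H}^2$-regular, and then interpolate $P$ into $ {S}_\C^0$ with a mesh-controlled error.

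Fix $R_\C \in  {S}_\C^0$ with $\|R_\C\|_1=1$. First I would observe from \eqref{LDG-linearization}--\eqref{BM-linearization} that the linear functional $W \mapsto \mathcal{B}_L(R_\C,W)$ is an $L^2$-pairing with a tensor field $G_{R_\C}\in L^2(\Omega; {S})$ satisfying $\|G_{R_\C}\|_{L^2}\leq C_Q \|R_\C\|_{L^2}$; for the LDG case this follows from the polynomial nonlinearity together with $\mathcal{H}^2(\Omega)\hookrightarrow L^\infty(\Omega)$, while for the BM case it uses the strict physicality of $Q$ to keep $\partial^2 f/\partial Q^2$ uniformly bounded. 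Lax--Milgram for the coercive form $\mathcal{A}$ then produces a unique $\tilde R\in \mathcal{H}^1_0(\Omega)$ solving
\[
\mathcal{A}(\tilde R, W) = \mathcal{B}_L(R_\C, W) \qquad \forall\, W\in \mathcal{H}^1_0(\Omega),
\]
and the $\mathcal{H}^2$ elliptic regularity recalled in Section \ref{higher regularity} upgrades $\tilde R \in \mathcal{H}^2(\Omega)$ with $\|\tilde R\|_2 \leq C_{\rm reg} C_Q \|R_\C\|_1$. Setting $P:=R_\C+\tilde R$ one has $a(R_\C,W)=\mathcal{A}(R_\C+\tilde R,W)=\mathcal{A}(P,W)$ for every $W$, so combining \eqref{continuous-inf-sup} with the boundedness of $\mathcal{A}$ from Lemma \ref{coercivity-elstic-energy}(i) yields the central estimate $\|P\|_1 \geq (\beta/C_E)\|R_\C\|_1$.

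Next I would pick $P_\C:=\I_\C P \in  {S}_\C^0$ as the discrete test function. Because $R_\C$ is fixed by $\I_\C$, the interpolation error reduces to $P-P_\C=\tilde R-\I_\C\tilde R$, and Lemma \ref{Interpolation estimateConforming}(i) together with the regularity of $\tilde R$ gives $\|P-P_\C\|_1 \leq C_I C_{\rm reg} C_Q \delta_1 \|R_\C\|_1$ for any $\mathcal{T}\in \mathbb{T}(\delta)$ with $\delta \leq \delta_1$. Splitting
\[
a(R_\C, P_\C) = \mathcal{A}(P, P_\C) = \mathcal{A}(P_\C, P_\C) + \mathcal{A}(P-P_\C, P_\C),
\]
applying coercivity $\mathcal{A}(P_\C,P_\C)\geq C_{q_1}\|P_\C\|_1^2$ (Lemma \ref{coercivity-elstic-energy}(ii)) and boundedness $|\mathcal{A}(P-P_\C,P_\C)|\leq C_E \|P-P_\C\|_1 \|P_\C\|_1$, dividing by $\|P_\C\|_1$, and using the triangle inequality $\|P_\C\|_1 \geq \|P\|_1 - \|P-P_\C\|_1 \geq \big(\beta/C_E - C_I C_{\rm reg} C_Q \delta_1\big)\|R_\C\|_1$ produces exactly the sum of error contributions $C_{q_1}\|P-P_\C\|_1 + C_E\|P-P_\C\|_1 = (C_E+C_{q_1}) C_I C_{\rm reg} C_Q \delta_1 \|R_\C\|_1$ appearing in $\beta_1$, so that $\max_{P_\C} a(R_\C,P_\C)/\|P_\C\|_1 \geq \beta_1 \|R_\C\|_1$.

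The hard part is the $\mathcal{H}^2$-regularity of $\tilde R$: it relies on (i) the genuine $L^2$-character and quantitative bound $C_Q$ of the linearized bulk operator -- for the BM potential this in turn hinges on the standing strict-physicality assumption on $Q$, which controls the blow-up of $\partial^2 f/\partial Q^2$ near $\partial \mathcal{Q}_{phy}$ via \eqref{blow-up-f} -- and (ii) the $\mathcal{H}^2$ elliptic regularity of the anisotropic form $\mathcal{A}$ on the convex domain $\Omega$ under the ellipticity condition on $L_1,L_2,L_3$ recalled in Section \ref{higher regularity}. Once these are in place, taking $\delta_1$ sufficiently small that $\beta C_{q_1}/C_E > (C_E+C_{q_1}) C_I C_{\rm reg} C_Q \delta_1$ guarantees $\beta_1>0$; the strict inequality $\beta_1<\beta_h$ follows from the slack in the estimates (e.g.\ working with $\delta$ strictly below $\delta_1$, or from non-tightness of the boundedness/coercivity constants).
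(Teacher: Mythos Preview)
Your proposal is correct and follows essentially the same Schatz-type argument as the paper: you introduce the identical auxiliary problem $\mathcal{A}(\tilde R,\cdot)=\mathcal{B}_L(R_\C,\cdot)$ (the paper's $\Xi$), invoke the same $L^2$ bound $C_Q$ and $\mathcal{H}^2$ elliptic regularity, use the continuous inf-sup to lower-bound $\|R_\C+\tilde R\|_1$, and then interpolate to $S_\C^0$ with the same splitting that produces the constant $(\beta C_{q_1}/C_E)-(C_E+C_{q_1})C_IC_{\rm reg}C_Q\delta_1$. The only cosmetic difference is that you take $P_\C=\I_\C(R_\C+\tilde R)$ explicitly and decompose $\mathcal{A}(P,P_\C)=\mathcal{A}(P_\C,P_\C)+\mathcal{A}(P-P_\C,P_\C)$, whereas the paper phrases the coercivity step as ``there exists a unit $P_\C$ with $C_{q_1}\|\nabla(R_\C+\I_\C\Xi)\|_{L^2}\le \mathcal{A}(R_\C+\I_\C\Xi,P_\C)$''; both are the same choice up to normalization.
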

\begin{proof}
    For $R_\C \in  {S}_\C^0$ with $\norm{R_\C}_1=1,$ let $\Xi \in \mathcal{H}^1_0(\Omega)$ solve the following system:
    \begin{align}\label{a linearized problem}
   \mathcal{A}(\Xi, P)=\mathcal{B}_L(R_\C, P) \text{ for all } P \in \mathcal{H}^1_0(\Omega).
\end{align}
\noindent Recall $\mathcal{B}_L(\bullet,\bullet)$ from \eqref{LDG-linearization} for (a), the LDG bulk potential,  
\begin{align*}
    \mathcal{B}_L(R_C,P):=\int_\Omega \big(-a \text{tr}(R_C P)  -  2b (\text{tr}(QR_C P) )+ c (\text{tr}(Q^2) \text{tr}(R_C P) +2\text{tr}(QR_C) \text{tr}(QP)) \big)\dx.
\end{align*}
The properties of the trace and Cauchy-Schwarz   inequality, and the Sobolev embedding result $\mathcal{H}^2(\Omega) \hookrightarrow L^\infty(\Omega;S) $ lead to 
\begin{align*}
   & \int_\Omega \text{tr}(R_C P) \dx \leq  \int_\Omega \abs{R_C}_{F} \abs{P}_{F} \dx \leq \norm{R_C}_{L^2(\Omega)} 
   \norm{P}_{L^2(\Omega)},\\&
     \int_\Omega \text{tr}(QR_C P) \dx \leq   \int_\Omega  \abs{Q}_{F} \abs{R_C}_{F} \abs{P}_{F} \dx \leq \norm{Q}_{L^\infty(\Omega)} \norm{R_C}_{L^2(\Omega)} \norm{P}_{L^2(\Omega)}.
\end{align*}
Similarly, we obtain 
\begin{align*}
  &\int_\Omega \text{tr}(Q^2) \text{tr}(R_C P) \dx +2   \int_\Omega \text{tr}(QR_C) \text{tr}(QP) \dx \\&\leq 3\int_\Omega \abs{Q}_{F}^2 \abs{R_C}_{F}\abs{P}_{F} \dx \leq 3\norm{Q}^2_{L^\infty(\Omega)}  \norm{R_C}_{L^2(\Omega)} \norm{P}_{L^2(\Omega)}.
\end{align*}
This implies that given $R_\C \in  {S}_\C^0$ with $\norm{R_\C}_1=1,$ the  right hand side operator in \eqref{a linearized problem} for the LDG bulk potential satisfies   $\mathcal{B}_L(R_\C, \bullet) =D\mathcal{B}_{LDG}(Q;R_\C, \bullet)  \in \mathcal{L}(L^2(\Omega,  {S}),\mathbb{R}),$ and is  bounded by
\begin{align}\label{dis-infsup-fbdd-LDG}
  \abs{D\mathcal{B}_{LDG}(Q;R_\C, \bullet)}  \leq a+2b\norm{Q}_{L^\infty(\Omega)}+3c \norm{Q}_{L^\infty(\Omega)}^2 .
\end{align}
Next we proceed with a similar  estimate for the case of the BM potential, (b). Recall that $\mathcal{B}_L(\bullet,\bullet)$ from \eqref{BM-linearization} for the BM potential is given by 
 $$D\mathcal{B}_{BM}(Q; R_\C,P) = \frac{T}{2}\int_\Omega R_C:  \frac{\partial^2  f(\Qvec)}{\partial \Qvec^2}:\Pvec  \dx -\kappa\int_\Omega   R_C:\Pvec  \dx, $$ with $ \int_\Omega R_C:  \frac{\partial^2  f(\Qvec)}{\partial \Qvec^2}:\Pvec  \dx=\int_\Omega \sum_{i,j,k,l} R_{C,ij} \frac{\partial^2  f (\Qvec)}{\partial \Qvec_{ij} \partial\Qvec_{kl} } \Pvec_{kl}  \dx.   $
 The fact that $f$ is smooth on $\mathcal{Q}_{phy}(\widetilde{\epsilon})$, and the Cauchy-Schwarz inequality lead to 
\begin{align*}
    \int_\Omega \sum_{i,j,k,l} R_{C,ij} \frac{\partial^2  f (\Qvec)}{\partial \Qvec_{ij} \partial\Qvec_{kl} } \Pvec_{kl}  \dx &
    \leq 
    \Big|\Big|\frac{\partial^2  f (\Qvec)}{\partial\Qvec^2}\Big|\Big|_{L^\infty(\Omega)}
    \int_\Omega \sum_{i,j,k,l} \abs{R_{C,ij}} \abs{\Pvec_{kl}}  \dx 
    \\& \leq \Big|\Big|\frac{\partial^2  f (\Qvec)}{\partial\Qvec^2}\Big|\Big|_{L^\infty(\Omega)}
    \norm{R_C}_{L^2(\Omega)} \norm{P}_{L^2(\Omega)}.
\end{align*}
Also, $ \int_\Omega   R_C:\Pvec \, \dx  \leq  \norm{R_C}_{L^2(\Omega)} \norm{P}_{L^2(\Omega)}. $
Therefore, $D\mathcal{B}_{BM}(Q; R_\C,\bullet)   \in \mathcal{L}(L^2(\Omega,  {S}),\mathbb{R}),$ and for $\norm{R_\C}_1=1,$ the following bound is achieved
\begin{align}\label{dis-infsup-fbdd-BM}
    \abs{D\mathcal{B}_{BM}(Q; R_\C,\bullet)}\leq \Big|\Big|\frac{\partial^2  f (\Qvec)}{\partial\Qvec^2}\Big|\Big|_{L^\infty(\Omega)}
    +\kappa .
\end{align}
Under the assumption \eqref{cond-elas-const} on the elasticity parameters, we have the uniform ellipticity of $\mathcal{A}(\bullet, \bullet)$ in $\mathcal{H}^1_0(\Omega).$ This, \eqref{dis-infsup-fbdd-LDG} and \eqref{dis-infsup-fbdd-BM} confirm the existence of a unique solution of \eqref{a linearized problem}. From the elliptic regularity discussed in Subsection \ref{higher regularity}, $\Xi \in \mathcal{H}^2(\Omega)\cap \mathcal{H}^1_0(\Omega)$,, and the following regularity estimate holds 
\begin{align}\label{dis-infsup: regularity}
    \norm{\Xi}_{\mathcal{H}^2(\Omega)} \leq C_{\rm reg} \norm{\mathcal{B}_L(R_\C, \cdot)}_{L^2(\Omega)} \leq  C_{\rm reg} C_Q ,
\end{align}
where the positive constant $C_Q:=\begin{cases}
(a+2b\norm{Q}_{L^\infty(\Omega)}+3c \norm{Q}_{L^\infty(\Omega)}^2) & \text{for LDG bulk potential},\\
    \Big(\frac{T}{2} 
    \Big|\Big|\frac{\partial^2  f (\Qvec)}{\partial\Qvec^2}\Big|\Big|_{L^\infty(\Omega)}
    +\kappa \Big) & \text{for BM bulk potential}.
\end{cases} $
Here, $ C_{\rm reg} >0$ is an elliptic regularity constant, and also depends on the elasticity constants $L_1, L_2, L_3$.
 The inf-sup condition in \eqref{continuous-inf-sup}, and \eqref{a linearized problem} yield that there exists $P \in \mathcal{H}^1_0(\Omega)$ with $\norm{P}_1=1$ such that 
\begin{align}\label{dis-infsup-equn}
    \beta&=\beta\norm{R_\C}_1\leq DN(\Qvec;R_\C,P )=\mathcal{A}(R_\C + \Xi,P) \notag \\& \leq C_E \norm{\nabla (R_\C + \Xi)}_{L^2(\Omega)} \norm{P}_1 \leq C_E(\norm{\nabla (R_\C + \I_\C \Xi)}_{L^2(\Omega)}+\norm{\nabla ( \I_\C \Xi-\Xi)}_{L^2(\Omega)}).
\end{align}
Recall the constant $C_E$ from Lemma \ref{coercivity-elstic-energy}$(i)$.
The coercivity of $\mathcal{A}(\cdot, \cdot)$ in Lemma \ref{coercivity-elstic-energy}$(ii)$ implies that there exists $P_\C \in  {S}_\C^0  $ with $\norm{P_\C}_1 =1$ such that
\begin{align*}
   C_{q_1} \norm{\nabla (R_\C + \I_\C \Xi)}_{L^2(\Omega)} \leq \mathcal{A}(R_\C + \I_\C \Xi,P_\C )= DN(\Qvec;R_\C,P_\C )+\mathcal{A}(\I_\C \Xi-\Xi, P_\C  ).
\end{align*}
For the upper bound for the second terms on the right hand side of the above two estimate, we utilize  Lemmas \ref{coercivity-elstic-energy}$(i)$, \ref{Interpolation estimateConforming}$(i)$, and \eqref{dis-infsup: regularity}, and $\|P_C\|_1=1$ to obtain 
$$\mathcal{A}(\I_\C \Xi-\Xi, P_\C  )+ C_{q_1} \norm{\nabla ( \I_\C \Xi-\Xi)}_{L^2(\Omega)} \leq ( C_E + C_{q_1}) C_I h \norm{\Xi}_{\mathcal{H}^{2}(\Omega)} \leq  ( C_E + C_{q_1} ) C_I C_{\rm reg} C_Q  h.$$ 
A combination of  the above three estimates  results in 
\begin{align*}
     (\beta  C_{q_1} / C_E) - ( C_E + C_{q_1} ) C_I C_{\rm reg} C_Q  h\leq DN(\Qvec;R_\C,P_\C )  .
\end{align*}
This  reveals that the discrete inf-sup condition holds for all $\mathcal{T}\in \mathbb{T}(\delta)$ with $0< \delta \leq \delta_1$ with $\delta_1:=  \beta  C_{q_1} / 2C_E ((C_E + C_{q_1}) C_I C_{\rm reg} C_Q).$ 
This completes the proof.
\end{proof}
\begin{rem}[effect of "physical regime"]
    Note that discrete inf-sup condition in Theorem \ref{thm: dis-inf-sup} associated to the BM potential does not need an extra assumption on the "physicality" of the interpolation operator, whereas Lemma \ref{Interpolation estimateConforming}$(ii)$  is crucial to establish the existence and local uniqueness of the discrete solution in Theorem \ref{thm: existance-uniqueness}. 
\end{rem}

\section{Analysis of the nonlinearity}\label{Analysis of nonlinearity}
In this section, we deal with the non-linearities that appear in both the LDG bulk potential and the singular potential, and discuss some properties of it that are  important to establish the existence, local uniqueness of the discrete solution, and the error estimates. 
\subsection{LDG potential}\label{LDG-potential-nonlinear}

For $ R_\C,P_\C \in  {S}_\C^0,$ and $\widetilde{Q} \in {S}$, the Frech\'et derivative of $\mathcal{B}_{LDG}$ at $\widetilde{Q}$ is given by
\begin{align}\label{defn: LDG lipschitz}
    D\mathcal{B}_{LDG}(\widetilde{Q}; R_\C,P_\C):=&\int_\Omega \big(-a \text{tr}(R_\C P_\C)  -  2b \text{tr}(\widetilde{Q}  R_\C P_\C)  \big) \dx \notag\\&+ c \int_\Omega \big((\text{tr}((\widetilde{Q})^2) \text{tr}(R_\C P_\C) +2\text{tr}(\widetilde{Q}R_\C) \text{tr}(\widetilde{Q}P_\C)) \big) \dx. 
\end{align}
\begin{lem}[Lipschitz continuity of $D\mathcal{B}_{LDG}$]\label{Lipschitz LDG}
For the regular solution $Q \in \mathcal{H}^2(\Omega)$, its interpolation $\I_\C Q $ from Lemma  \ref{Interpolation estimateConforming}$(i),$ and $\widetilde{Q}_\C:=\I_\C Q + Q_\C$ for any $Q_\C\in  {S}_\C^0,$
 the following Lipschitz continuity of $D\mathcal{B}_{LDG}$ holds for $Q_\C^{(1)}, Q_\C^{(2)} \in B_{ {S}_\C^0}(0,1):=\{P_C\in  {S}_\C^0: \, \norm{P_\C}_1 < 1   \}$,
\begin{align*}
    \norm{ D\mathcal{B}_{LDG}(\widetilde{Q}_\C^{(1)})-D\mathcal{B}_{LDG}(\widetilde{Q}_\C^{(2)})}_{\mathcal{L}({( {S}_\C^0)}^*,  {S}_\C^0)}
    \leq L\norm{Q_\C^{(1)} -Q_\C^{(2)}}_{1} 
\end{align*}
with the  Lipschitz constant $ \displaystyle L:=
2b  C_S^3(3)+6c C_S^4(4)(1+( 1+C_I h) \norm{Q}_{\mathcal{H}^2(\Omega)}).   $
\end{lem}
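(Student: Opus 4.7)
The plan is to expand the difference $D\mathcal{B}_{LDG}(\widetilde{Q}_\C^{(1)};\cdot,\cdot) - D\mathcal{B}_{LDG}(\widetilde{Q}_\C^{(2)};\cdot,\cdot)$ using \eqref{defn: LDG lipschitz} and bound each of the three nonlinear contributions (the $b$-term and the two $c$-terms) by a multiple of $\norm{Q_\C^{(1)}-Q_\C^{(2)}}_1 \norm{R_\C}_1 \norm{P_\C}_1$. Note that $\widetilde{Q}_\C^{(1)} - \widetilde{Q}_\C^{(2)} = Q_\C^{(1)} - Q_\C^{(2)}$, since the interpolant $\I_\C Q$ drops out; the $a$-contribution depends only on $(R_\C, P_\C)$ and therefore vanishes from the difference outright.

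For the cubic $b$-contribution I would rewrite $\text{tr}(\widetilde{Q}_\C^{(1)} R_\C P_\C) - \text{tr}(\widetilde{Q}_\C^{(2)} R_\C P_\C) = \text{tr}((Q_\C^{(1)}-Q_\C^{(2)})R_\C P_\C)$, bound the pointwise trace by the product of three Frobenius norms, apply a threefold Hölder inequality with exponents $(3,3,3)$, and finish with the Sobolev embedding $\mathcal{H}^1(\Omega)\hookrightarrow L^3(\Omega)$ with constant $C_S(3)$. This yields the summand $2b\, C_S(3)^3$ in $L$.

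For the two quartic $c$-contributions I would use the algebraic identities
\begin{align*}
\text{tr}\big((\widetilde{Q}_\C^{(1)})^2\big) - \text{tr}\big((\widetilde{Q}_\C^{(2)})^2\big) &= \text{tr}\big((\widetilde{Q}_\C^{(1)}+\widetilde{Q}_\C^{(2)})(Q_\C^{(1)}-Q_\C^{(2)})\big),\\
\text{tr}(\widetilde{Q}_\C^{(1)} R_\C)\text{tr}(\widetilde{Q}_\C^{(1)} P_\C) - \text{tr}(\widetilde{Q}_\C^{(2)} R_\C)\text{tr}(\widetilde{Q}_\C^{(2)} P_\C) &= \text{tr}((Q_\C^{(1)}-Q_\C^{(2)}) R_\C)\,\text{tr}(\widetilde{Q}_\C^{(1)} P_\C) \\
&\quad + \text{tr}(\widetilde{Q}_\C^{(2)} R_\C)\,\text{tr}((Q_\C^{(1)}-Q_\C^{(2)}) P_\C),
\end{align*}
both valid by cyclicity of the trace, and thereby extract the desired increment $Q_\C^{(1)}-Q_\C^{(2)}$. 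Each resulting product is then estimated pointwise by four Frobenius norms, a fourfold Hölder with exponents $(4,4,4,4)$, and the Sobolev embedding $\mathcal{H}^1(\Omega)\hookrightarrow L^4(\Omega)$ with constant $C_S(4)$. The remaining factors $\norm{\widetilde{Q}_\C^{(i)}}_1$ (or $\norm{\widetilde{Q}_\C^{(1)}+\widetilde{Q}_\C^{(2)}}_1$) are handled by the triangle inequality together with the hypothesis $\norm{Q_\C^{(i)}}_1<1$ and the interpolation-stability bound $\norm{\I_\C Q}_1 \leq \norm{Q}_1 + \norm{\I_\C Q - Q}_1 \leq (1+C_I h)\norm{Q}_{\mathcal{H}^2(\Omega)}$ from Lemma~\ref{Interpolation estimateConforming}(i). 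Adding the $c$-coefficient ($2$ from the first identity and $2+2$ from the telescoping) produces the factor $6c\, C_S(4)^4 (1 + (1+C_I h)\norm{Q}_{\mathcal{H}^2(\Omega)})$.

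I do not foresee a genuine obstacle; the work is essentially bookkeeping of Hölder exponents and Sobolev constants. The only points requiring mild care are (i) choosing the exponents so that each factor lies in an $L^p$ reached by the three-dimensional Sobolev embedding from $\mathcal{H}^1$ (hence $p\in[1,6]$), forcing $p=3$ for the cubic term and $p=4$ for the quartic, and (ii) distributing the telescoping of the cross-term so that the remaining factors split as a single $\widetilde{Q}_\C^{(1)}$ and a single $\widetilde{Q}_\C^{(2)}$; combined with $\norm{Q_\C^{(i)}}_1<1$ and the interpolation bound, this yields exactly the factor $1+(1+C_I h)\norm{Q}_{\mathcal{H}^2(\Omega)}$ stated in $L$, rather than a strictly larger prefactor.
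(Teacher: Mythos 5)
Your proposal matches the paper's proof in essentially all respects: the same telescoping decomposition after cancelling the $a$-term, the same factoring of the difference of squares and the same ``$ab-cd=(a-c)b+c(b-d)$'' split of the cross-term, the same Hölder exponents ($3,3,3$) for the cubic and ($4,4,4,4$) for the quartic contributions, and the same bound $\norm{\I_\C Q}_1\leq(1+C_I h)\norm{Q}_{\mathcal{H}^2(\Omega)}$ to produce the stated Lipschitz constant. The only cosmetic quibble is that your second algebraic identity is a telescoping of scalar products rather than an instance of trace cyclicity, but this does not affect the argument.
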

\begin{proof}
For $Q_\C^{(1)},Q_\C^{(2)} \in B_{ {S}_\C^0}(0,1)$ and $ R_\C,P_\C \in  {S}_\C^0,$ the definition of $D\mathcal{B}_{LDG}$  in \eqref{defn: LDG lipschitz}, a cancellation of the linear terms, and a re-arrangement of the remaining terms imply
\begin{align}\label{thm: existence-uniqueness-1}
& D\mathcal{B}_{LDG}(\widetilde{Q}_\C^{(1)}; R_\C,P_\C)-D\mathcal{B}_{LDG}(\widetilde{Q}_\C^{(2)}; R_\C,P_\C) 
\notag \\& = 2b\int_\Omega   \text{tr}((Q_\C^{(2)}-Q_\C^{(1)})R_\C P_\C)  \dx+c \int_\Omega  \text{tr}((Q_\C^{(1)} + \I_\C Q)^2-(Q_\C^{(2)}+\I_\C Q)^2) \, \text{tr}(R_\C P_\C) \dx \notag \\& \quad+2c\int_\Omega \big(\text{tr}((Q_\C^{(1)}-Q_\C^{(2)}) R_\C) \text{tr}((Q_\C^{(1)} +\I_\C Q)P_\C)+\text{tr}((Q_\C^{(2)}+\I_\C Q) R_\C) \text{tr}((Q_\C^{(1)}-Q_\C^{(2)})P_\C) \big) \dx \notag \\&=:T_1+T_2+T_3.
\end{align}
Using the  properties of the trace, the Hölder's  inequality  and the Sobolev embedding $ \mathcal{H}^1(\Omega) \hookrightarrow L^3(\Omega;S)$ for $\Omega \subset \mathbb{R}^3$ results in  
\begin{align*}
  T_1
   \leq  2b \int_\Omega  \abs{Q_\C^{(2)}-Q_\C^{(1)}}_F \abs{R_\C}_F \abs{P_\C}_F \dx  &\leq 2b \norm{Q_\C^{(2)}-Q_\C^{(1)}}_{L^3(\Omega)} \norm{R_\C}_{L^3(\Omega)} \norm{P_\C}_{L^3(\Omega)} 
   \\& \leq 2bC_S^3(3)\norm{Q_\C^{(2)}-Q_\C^{(1)}}_{1} \norm{R_\C}_{1} \norm{P_\C}_{1} ,
\end{align*}
where $C_S(p)>0$ is the constant associated to the Sobolev embedding $ \mathcal{H}^1(\Omega) \hookrightarrow L^p(\Omega;S)$.
To handle the $T_2$ and $T_3$ terms, we utilize the properties of trace, the Hölder's  inequality,  and Sobolev embedding $ \mathcal{H}^1(\Omega) \hookrightarrow L^4(\Omega;S) $ for $\Omega \subset \mathbb{R}^3$, and obtain 
\begin{align*}
    T_2&\leq c \int_\Omega \abs{Q_\C^{(1)}-Q_\C^{(2)}}_F  \abs{Q_\C^{(1)}+Q_\C^{(2)} +2\I_\C Q }_F \abs{R_\C}_F \abs{P_\C}_F \dx  \\& \leq c\norm{Q_\C^{(1)}-Q_\C^{(2)}}_{L^4(\Omega)} \norm{Q_\C^{(1)}+Q_\C^{(2)} +2\I_\C Q}_{L^4(\Omega)} \norm{R_\C}_{L^4(\Omega)} \norm{P_\C}_{L^4(\Omega)}
    \\&  \leq c C_S^4(4)
    \norm{Q_\C^{(1)}-Q_\C^{(2)}}_1 (\norm{Q_\C^{(1)}}_1 +\norm{Q_\C^{(2)}}_1+2\norm{\I_\C Q}_1)\norm{R_\C}_1 \norm{P_\C}_1 ,
\end{align*}
and
\begin{align*}
    T_3&\leq 2c \int_\Omega \abs{Q_\C^{(1)}-Q_\C^{(2)}}_F ( \abs{Q_\C^{(1)}+\I_\C Q }_F+ \abs{Q_\C^{(2)}+\I_\C Q }_F  )\abs{R_\C}_F \abs{P_\C}_F \dx 
    \\&  \leq 2c
 C_S^4(4)   \norm{Q_\C^{(1)}-Q_\C^{(2)}}_1 (\norm{Q_\C^{(1)}}_1 +\norm{Q_\C^{(2)}}_1+2\norm{\I_\C Q}_1)\norm{R_\C}_1 \norm{P_\C}_1. 
\end{align*}
Note that $\norm{Q_\C^{(1)}}_1 \leq 1$ and  $\norm{Q_\C^{(2)}}_1 \leq 1.$ For $\norm{R_\C}_1=1=\norm{P_\C}_1,$ a combination of the estimates for $T_1, T_2, T_3$ in \eqref{thm: existence-uniqueness-1}, and the bound 
$\norm{\I_\C Q}_1\leq \norm{\I_\C Q -Q}_1+\norm{ Q}_1 \leq (C_I h+1)\norm{ Q}_{\mathcal{H}^2(\Omega)}$
imply
\begin{align*}
  &D\mathcal{B}_{LDG}(\widetilde{Q}_\C^{(1)}; R_\C,P_\C)-D\mathcal{B}_{LDG}(\widetilde{Q}_\C^{(2)}; R_\C,P_\C) \\& \leq (2bC_S^3(3)+6c C_S^4(4)(1+(1+C_I h) \norm{ Q}_{\mathcal{H}^2(\Omega)}) ) \norm{Q_\C^{(1)} - Q_\C^{(2)}}_1 .
\end{align*}
This concludes the proof.
\end{proof}
\begin{lem}[Perturbation control of $D\mathcal{B}_{LDG}$]\label{LDG inf-sup-relation} For the regular solution $Q\in \mathcal{H}^2(\Omega)$ and its interpolation $\I_\C Q$ from Lemma  \ref{Interpolation estimateConforming}$(i),$ and $ R_\C,P_\C \in  {S}_\C^0,$ it holds
    \begin{align*}
     &D\mathcal{B}_{LDG}(\I_\C Q; R_\C,P_\C) -D\mathcal{B}_{LDG}(Q; R_\C,P_\C)\notag\\& \leq  (2b C_S^3(3) + 3c (2+C_I h) C_S^4(4) \norm{Q}_{\mathcal{H}^2(\Omega)})\norm{Q-\I_\C Q}_1\norm{R_\C}_1  \norm{P_\C}_1.
\end{align*}
\end{lem}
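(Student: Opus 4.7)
The plan is to mimic the argument used in Lemma \ref{Lipschitz LDG}, but instead of estimating the difference between two discrete perturbations, to estimate the difference obtained when replacing the exact regular solution $Q$ by its conforming interpolant $\I_\C Q$. Substituting $\widetilde{Q}=\I_\C Q$ and $\widetilde{Q}=Q$ respectively in the definition \eqref{defn: LDG lipschitz} of $D\mathcal{B}_{LDG}$ and subtracting, the linear (temperature) contribution $-a\,\text{tr}(R_\C P_\C)$ cancels, leaving three terms: one cubic term proportional to $b$ and two quartic terms proportional to $c$. I would label them $T_1^\prime, T_2^\prime, T_3^\prime$ in parallel to the decomposition \eqref{thm: existence-uniqueness-1}.

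For $T_1^\prime = -2b\int_\Omega \text{tr}((\I_\C Q - Q)R_\C P_\C)\dx$, I would apply the trace/Cauchy--Schwarz estimate and the three--fold Hölder inequality with the Sobolev embedding $\mathcal{H}^1(\Omega)\hookrightarrow L^3(\Omega;S)$ to obtain the bound $2b\,C_S^3(3)\,\norm{Q-\I_\C Q}_1\norm{R_\C}_1\norm{P_\C}_1$. For the first quartic term, I rewrite $\text{tr}((\I_\C Q)^2)-\text{tr}(Q^2) = \text{tr}((\I_\C Q - Q)(\I_\C Q + Q))$ and apply the four--fold Hölder inequality with the embedding $\mathcal{H}^1(\Omega)\hookrightarrow L^4(\Omega;S)$. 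For the second quartic term I use the add--and--subtract identity
\begin{align*}
&\text{tr}(\I_\C Q\, R_\C)\text{tr}(\I_\C Q\, P_\C) - \text{tr}(Q\, R_\C)\text{tr}(Q\, P_\C) \\
&\quad = \text{tr}((\I_\C Q - Q)R_\C)\text{tr}(\I_\C Q\, P_\C) + \text{tr}(Q\, R_\C)\text{tr}((\I_\C Q - Q)P_\C),
\end{align*}
so that again only factors of $\I_\C Q - Q$, $\I_\C Q$, $Q$, $R_\C$ and $P_\C$ appear and the four--fold Hölder bound applies.

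The key remaining ingredient is an $\mathcal{H}^1$-bound on $\I_\C Q$ and $Q$ in terms of $\norm{Q}_{\mathcal{H}^2(\Omega)}$. From the triangle inequality and Lemma \ref{Interpolation estimateConforming}(i) with $l=1$ I get $\norm{\I_\C Q}_1 \leq \norm{\I_\C Q - Q}_1 + \norm{Q}_1 \leq (1+C_I h)\norm{Q}_{\mathcal{H}^2(\Omega)}$, and trivially $\norm{Q}_1\le \norm{Q}_{\mathcal{H}^2(\Omega)}$, so the combined factor $\norm{\I_\C Q}_1+\norm{Q}_1$ (appearing in both quartic estimates) is controlled by $(2+C_I h)\norm{Q}_{\mathcal{H}^2(\Omega)}$. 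The two quartic contributions carry prefactors $c$ and $2c$, which sum to $3c$, producing the factor $3c(2+C_I h)C_S^4(4)\norm{Q}_{\mathcal{H}^2(\Omega)}$.

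Putting the three bounds together yields exactly the stated constant
\[
\bigl(2b\,C_S^3(3)+3c\,(2+C_I h)\,C_S^4(4)\,\norm{Q}_{\mathcal{H}^2(\Omega)}\bigr)\,\norm{Q-\I_\C Q}_1\,\norm{R_\C}_1\,\norm{P_\C}_1.
\]
The whole argument is essentially bookkeeping: the non--trivial steps are only (a) choosing the right add--and--subtract identity for the term $\text{tr}(\widetilde Q R_\C)\text{tr}(\widetilde Q P_\C)$ so that a single factor of $\I_\C Q - Q$ is exposed in each summand, and (b) keeping track of the interpolation estimate to convert $\norm{\I_\C Q}_1$ into $\norm{Q}_{\mathcal{H}^2(\Omega)}$. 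I do not expect any real obstacle; the estimate is strictly easier than Lemma \ref{Lipschitz LDG} because no quadratic dependence on the perturbation arises.
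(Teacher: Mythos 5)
Your proof is correct and follows essentially the same route as the paper: the same three-term decomposition after cancelling the $-a\,\text{tr}(R_\C P_\C)$ part, the same factorization $\text{tr}((\I_\C Q)^2)-\text{tr}(Q^2)=\text{tr}((\I_\C Q - Q)(\I_\C Q + Q))$ and add-and-subtract identity for the remaining quartic term, the same Hölder/Sobolev-embedding estimates, and the same triangle-inequality bound $\norm{\I_\C Q}_1+\norm{Q}_1\le (2+C_I h)\norm{Q}_{\mathcal{H}^2(\Omega)}$ to arrive at the identical constant.
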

\begin{proof}
The definition of $D\mathcal{B}_{LDG}$  in \eqref{defn: LDG lipschitz}, and a rearrangement of terms yield
\begin{align*}
    D\mathcal{B}_{LDG}(\I_\C Q; R_\C,P_\C)& -D\mathcal{B}_{LDG}(Q; R_\C,P_\C)=-2b \int_\Omega \text{tr}((\I_\C Q-Q)R_\C P_\C)\dx\\& +c  \int_\Omega \text{tr}(((\I_\C Q)^2-Q^2)R_\C P_\C)\dx +2c  \int_\Omega (\text{tr}((\I_\C Q-Q)R_\C) \text{tr}(\I_\C Q P_\C)\\& + \text{tr}(Q R_\C) \text{tr}((\I_\C Q-Q) P_\C))\dx =:T_1+T_2+T_3.
\end{align*}
The terms $T_1,T_2,T_3$ are controlled utilizing analogous   technique as in Lemma \ref{Lipschitz LDG}. We use  trace properties, the Hölder's  inequality,   Sobolev embedding results $ \mathcal{H}^1(\Omega) \hookrightarrow L^p(\Omega;S),$ $ p=3,4$ for $\Omega \subset \mathbb{R}^3$, and Lemma \ref{Interpolation estimateConforming}$(i)$ and obtain the following boundedness estimates
\begin{align*}
   &T_1 \leq 2b C_S^3(3) \norm{Q-\I_\C Q}_1\norm{R_\C}_1  \norm{P_\C}_1,\\&
   T_2 \leq c (2+C_I h) C_S^4(4) \norm{Q}_{\mathcal{H}^2(\Omega)}\norm{Q-\I_\C Q}_1\norm{R_\C}_1  \norm{P_\C}_1,
   \\&
   T_3 \leq 2c (2+C_I h) C_S^4(4) \norm{Q}_{\mathcal{H}^2(\Omega)}\norm{Q-\I_\C Q}_1\norm{R_\C}_1  \norm{P_\C}_1.
\end{align*}
A combination of the above three estimates completes the proof.
\end{proof}
\begin{lem}[Perturbation control of $\mathcal{B}_{LDG}$]\label{perturbation B-LDG}
For the regular solution $Q\in \mathcal{H}^2(\Omega)$ and its interpolation $\I_\C Q$ from Lemma  \ref{Interpolation estimateConforming}$(i),$ and $ P_\C \in  {S}_\C^0,$ it holds
    \begin{align*}
    \mathcal{B}_{LDG}(\I_\C Q,P_\C) -\mathcal{B}_{LDG}( Q,P_\C)\leq C_B \norm{\I_\C Q - Q}_1\norm{P_\C}_1  ,
\end{align*}
where $C_B:= a+bC_S^3(3)(2+C_I h)\norm{\Qvec}_{\mathcal{H}^2(\Omega)}  +2c C_S^4(4) (3 +2C_I^2h^2)\norm{\Qvec}_{\mathcal{H}^2(\Omega)}^2  .$
\end{lem}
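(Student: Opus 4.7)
The plan is to mimic the strategy already used in Lemmas~\ref{Lipschitz LDG} and \ref{LDG inf-sup-relation}: expand the difference $\mathcal{B}_{LDG}(\I_\C Q,P_\C)-\mathcal{B}_{LDG}(Q,P_\C)$ into the three contributions coming from the quadratic, cubic, and quartic terms of $\psi_B^{LDG}$, and control each with a combination of Cauchy--Schwarz/Hölder and the Sobolev embeddings $\mathcal{H}^1(\Omega)\hookrightarrow L^p(\Omega;S)$ for $p\in\{2,3,4\}$. The interpolation bound from Lemma~\ref{Interpolation estimateConforming}$(i)$ provides both the smallness factor $\|\I_\C Q-Q\|_1\lesssim C_I h\|Q\|_{\mathcal{H}^2}$ and the stability bound $\|\I_\C Q\|_1\le(1+C_I h)\|Q\|_{\mathcal{H}^2}$, which is what produces the $(2+C_I h)$ and $(3+2C_I^2 h^2)$ factors in $C_B$.

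Concretely, first I would write
\begin{align*}
\mathcal{B}_{LDG}(\I_\C Q,P_\C)-\mathcal{B}_{LDG}(Q,P_\C)
&= -a\!\int_\Omega \mathrm{tr}((\I_\C Q-Q)P_\C)\dx
   -b\!\int_\Omega \mathrm{tr}\bigl((\I_\C Q)^2-Q^2\bigr)P_\C\bigr)\dx \\
&\quad + c\!\int_\Omega\!\Bigl(|\I_\C Q|_F^2\,\mathrm{tr}(\I_\C Q\,P_\C)-|Q|_F^2\,\mathrm{tr}(Q\,P_\C)\Bigr)\dx
\;=:\; T_a+T_b+T_c.
\end{align*}
The linear term $T_a$ is immediate: Cauchy--Schwarz and $\|\cdot\|_{L^2(\Omega)}\le\|\cdot\|_1$ give $|T_a|\le a\,\|\I_\C Q-Q\|_1\|P_\C\|_1$, which supplies the $a$-contribution to $C_B$.

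For the quadratic contribution $T_b$, I would use the factorization
$(\I_\C Q)^2-Q^2=(\I_\C Q-Q)\I_\C Q + Q(\I_\C Q-Q)$,
apply Hölder with three factors in $L^3$, and invoke the embedding $\mathcal{H}^1(\Omega)\hookrightarrow L^3(\Omega;S)$ with constant $C_S(3)$. Stability of the interpolant, $\|\I_\C Q\|_1\le(1+C_I h)\|Q\|_{\mathcal{H}^2}$ together with $\|Q\|_1\le\|Q\|_{\mathcal{H}^2}$, then yields $|T_b|\le b\,C_S^3(3)(2+C_I h)\|Q\|_{\mathcal{H}^2}\,\|\I_\C Q-Q\|_1\|P_\C\|_1$, which is exactly the $b$-part of $C_B$.

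The main obstacle is the quartic term $T_c$, where four factors appear and the constants must be tracked carefully. I would split
\[
|\I_\C Q|_F^2\,\mathrm{tr}(\I_\C Q\,P_\C)-|Q|_F^2\,\mathrm{tr}(Q\,P_\C)
=\bigl(|\I_\C Q|_F^2-|Q|_F^2\bigr)\,\mathrm{tr}(\I_\C Q\,P_\C)
 + |Q|_F^2\,\mathrm{tr}\bigl((\I_\C Q-Q)\,P_\C\bigr),
\]
use the identity $|\I_\C Q|_F^2-|Q|_F^2=(\I_\C Q-Q)\!:\!(\I_\C Q+Q)$ to make the factor $|\I_\C Q-Q|_F$ explicit, and then apply Hölder with four factors in $L^4$ combined with $\mathcal{H}^1(\Omega)\hookrightarrow L^4(\Omega;S)$. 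The remaining $\mathcal{H}^1$ norms of $\I_\C Q$, $\I_\C Q+Q$, and $Q$ are all bounded via the triangle inequality and the stability estimate $\|\I_\C Q\|_1^2\le 2\|Q\|_{\mathcal{H}^2}^2(1+C_I^2 h^2)$; a bookkeeping of these bounds produces the factor $2(3+2C_I^2 h^2)\|Q\|_{\mathcal{H}^2}^2$ of the $c$-contribution. Summing the three estimates yields the stated bound with $C_B$ as claimed.
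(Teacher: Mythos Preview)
Your approach is correct and proves the lemma, but it is organized differently from the paper's argument. You split the difference directly along the three coefficients $a,b,c$ of $\psi_B^{LDG}$ and factor each piece algebraically to expose $\I_\C Q-Q$, in the style of Lemmas~\ref{Lipschitz LDG} and~\ref{LDG inf-sup-relation}. The paper instead Taylor-expands $\mathcal{B}_{LDG}(\,\cdot\,,P_\C)$ about $Q$ in the direction $R:=\I_\C Q-Q$: since $\psi_B^{LDG}$ is quartic the expansion terminates,
\[
\mathcal{B}_{LDG}(\I_\C Q,P_\C)-\mathcal{B}_{LDG}(Q,P_\C)=D\mathcal{B}_{LDG}(Q;R,P_\C)+\tfrac12 D^2\mathcal{B}_{LDG}(Q;R,R,P_\C)+\tfrac16 D^3\mathcal{B}_{LDG}(Q;R,R,R,P_\C),
\]
and each derivative is then bounded by H\"older plus the Sobolev embeddings, with a final Young inequality on the cross term $\norm{Q}_1\norm{R}_1$ producing the factor $(3+2C_I^2h^2)$. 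The Taylor route is more systematic and generalizes verbatim to any polynomial bulk density; your direct factorization is more elementary and in fact yields the sharper $c$-bracket $(3+3C_Ih+C_I^2h^2)\norm{Q}_{\mathcal{H}^2}^2$ rather than $2(3+2C_I^2h^2)\norm{Q}_{\mathcal{H}^2}^2$. One small caveat: the ``bookkeeping'' you sketch via $\norm{\I_\C Q}_1^2\le 2(1+C_I^2h^2)\norm{Q}_{\mathcal{H}^2}^2$ does not literally reproduce the stated constant $2(3+2C_I^2h^2)$; it gives a smaller one, which of course still implies the claimed inequality with $C_B$ as written.
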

\begin{proof}
Recall the definition of $\mathcal{B}_{LDG}$ from \eqref{nonlinear pde}.
We apply the  Taylor series expansion around the exact solution $Q$ and in the direction of $R:=\I_\C Q -Q \in S,$
   \begin{align*}
      & \mathcal{B}_{LDG}(\I_\C Q,P_\C) -\mathcal{B}_{LDG}( Q,P_\C)\\&= D\mathcal{B}_{LDG}( Q;R, P_\C) +\frac{1}{2}D^2\mathcal{B}_{LDG}( Q;R,R, P_\C) +\frac{1}{6}D^3\mathcal{B}_{LDG}( Q;R,R,R, P_\C) .
   \end{align*}
   Recall the definition of the first order derivative $D\mathcal{B}_{LDG}( Q;\bullet, \bullet)$ from \eqref{defn: LDG lipschitz}.
   A direct calculation of the higher order derivatives of $ \mathcal{B}_{LDG}$ in the above expansion yields 
   \begin{align*}
      &
D^2\mathcal{B}_{LDG}( Q;R,R, P_\C):=\int_\Omega \big(-2b \text{tr}(R^2 P_\C)+4c \text{tr}(QR )\text{tr}(R P_\C)+2c \text{tr}(R^2 )\text{tr}(Q P_\C)\big) \dx,\\&
D^3\mathcal{B}_{LDG}( Q;R,R,R, P_\C):=\int_\Omega 6c \text{tr}(R^2 )\text{tr}(R P_\C) \dx.
   \end{align*}
The estimates for $D\mathcal{B}_{LDG}( Q;\bullet, \bullet)$, and the above two higher order derivatives are derived from trace properties and the Hölder's  inequality combined with the Sobolev embeddings $ \mathcal{H}^1(\Omega) \hookrightarrow L^p(\Omega;S),$ $ p=2,3,4$ for $\Omega \subset \mathbb{R}^3$, and it is obtained that
 \begin{align*}
    & D\mathcal{B}_{LDG}( Q;R, P_\C) \leq (a   +2bC_S^3(3)\norm{Q}_1 +3cC_S^4(4) \norm{Q}_1^2 ) \norm{R}_1 \norm{P_\C}_1,\\&
    D^2\mathcal{B}_{LDG}( Q;R,R, P_\C)  \leq 2(bC_S^3(3)+3c C_S^4(4)\norm{Q}_1)\norm{R}_1^2\norm{P_\C}_1,\\&
  D^3\mathcal{B}_{LDG}( Q;R,R,R, P_\C) \leq   6c C_S^4(4)\norm{R}_1^3\norm{P_\C}_1.
   \end{align*}
Therefore, it holds
\begin{align*}
  &  \mathcal{B}_{LDG}(\I_\C Q,P_\C) -\mathcal{B}_{LDG}( Q,P_\C) \\& \leq \big(a+b C_S^3(3) (2\norm{Q}_1 +\norm{R}_1)+2c C_S^4(4)(3\norm{Q}_1^2 +2\norm{R}_1^2) \big) \norm{R}_1 \norm{P_\C}_1.
\end{align*}
Here we use Young's inequality for a simplification of the last term on the right hand side of the inequality. This and Lemma \ref{Interpolation estimateConforming}$(i)$ concludes the proof.
\end{proof}
\subsection{BM potential}
For $ R_\C,P_\C \in  {S}_\C^0,$ and $\widetilde{Q} \in \mathcal{Q}_{phy}(\widetilde{\epsilon})$, the Frech\'et derivative of $\mathcal{B}_{BM}$ at $\widetilde{Q}$ is given by
\begin{align}\label{defn: BM lipschitz}
    D\mathcal{B}_{BM}(\widetilde{Q}; R_\C,P_\C):=  \frac{T}{2}\int_\Omega R_\C:  \frac{\partial^2  f(\widetilde{Q})}{\partial \Qvec^2}:\Pvec_\C  \dx -\kappa \int_\Omega   R_\C :\Pvec_\C  \dx. 
\end{align}
\begin{lem}[Lipschitz continuity of $D\mathcal{B}_{BM}$]\label{Lipschitz BM}
For the regular solution $Q\in \mathcal{H}^2(\Omega)\cap \mathcal{Q}_{phy}(\widetilde{\epsilon})$ and its interpolation $\I_\C Q \in \mathcal{Q}_{phy}(\widetilde{\epsilon}_1)  $ from Lemma  \ref{Interpolation estimateConforming}$(ii),$ and $\widetilde{Q}_\C:=\I_\C Q + Q_\C$ for any $Q_\C\in  {S}_\C^0,$
 the following Lipschitz continuity of $D\mathcal{B}_{BM}$ holds for  $Q_\C^{(1)}, Q_\C^{(2)} \in \mathcal{D}:=\{R_\C \in B_{ {S}_\C^0} (0, 1)| \, R_\C +\I_\C \Qvec \in \mathcal{Q}_{phy}(\widetilde{\epsilon_1}) \}\subset {S}_\C^0$, 
    \begin{align*}
    \norm{ D\mathcal{B}_{BM}(\widetilde{Q}_\C^{(1)})-D\mathcal{B}_{BM}(\widetilde{Q}_\C^{(2)})}_{\mathcal{L}({( {S}_\C^0)}^*,  {S}_\C^0)}
    \leq L\norm{Q_\C^{(1)} -Q_\C^{(2)}}_{1}
\end{align*}
for all $\mathcal{T} \in \mathbb{T}(\delta_0).$ Here $L:=\frac{T   C_S^3(3) }{2}  \sup_{P \in \mathcal{Q}_{phy}(\widetilde{\epsilon_1}) }\Big| \frac{\partial^3  f (P)}{\partial \Qvec^3 }\Big| $.
 Recall $\delta_0$ from Lemma \ref{Interpolation estimateConforming}$(ii)$.
\end{lem}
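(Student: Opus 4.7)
The plan is to mirror the structure of Lemma \ref{Lipschitz LDG}, but to exploit that the polynomial cubic nonlinearity is replaced by the smooth (but non-explicit) function $f$ on $\mathcal{Q}_{phy}(\widetilde{\epsilon}_1)$. First, from the definition of $D\mathcal{B}_{BM}$ in \eqref{defn: BM lipschitz} the $\kappa$-terms cancel since they do not involve $\widetilde{Q}$, so for any $R_\C, P_\C \in S_\C^0$,
\begin{align*}
&D\mathcal{B}_{BM}(\widetilde{Q}_\C^{(1)}; R_\C,P_\C) - D\mathcal{B}_{BM}(\widetilde{Q}_\C^{(2)}; R_\C,P_\C) \\
&\qquad =  \frac{T}{2}\int_\Omega R_\C : \Bigl(\frac{\partial^2 f(\widetilde{Q}_\C^{(1)})}{\partial \Qvec^2} - \frac{\partial^2 f(\widetilde{Q}_\C^{(2)})}{\partial \Qvec^2}\Bigr) : P_\C \, \dx.
\end{align*}

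Next I would write the difference of the Hessians of $f$ by the fundamental theorem of calculus along the segment joining $\widetilde{Q}_\C^{(2)}$ to $\widetilde{Q}_\C^{(1)}$:
\begin{align*}
\frac{\partial^2 f(\widetilde{Q}_\C^{(1)})}{\partial \Qvec^2} - \frac{\partial^2 f(\widetilde{Q}_\C^{(2)})}{\partial \Qvec^2} = \int_0^1 \frac{\partial^3 f(\widetilde{Q}_\C^{(t)})}{\partial \Qvec^3} : (Q_\C^{(1)} - Q_\C^{(2)}) \, dt,
\end{align*}
where $\widetilde{Q}_\C^{(t)} := (1-t)\widetilde{Q}_\C^{(2)} + t\widetilde{Q}_\C^{(1)}$ and I used $\widetilde{Q}_\C^{(1)} - \widetilde{Q}_\C^{(2)} = Q_\C^{(1)} - Q_\C^{(2)}$. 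A key point is that the segment stays in $\mathcal{Q}_{phy}(\widetilde{\epsilon}_1)$: the set $\mathcal{Q}_{phy}(\widetilde{\epsilon}_1)$ is convex (since $\lambda_{\max}$ is convex and $\lambda_{\min}$ is concave on $S$), and by definition of $\mathcal{D}$ both endpoints $\widetilde{Q}_\C^{(j)} = Q_\C^{(j)} + \I_\C Q$ lie in $\mathcal{Q}_{phy}(\widetilde{\epsilon}_1)$. Since $f\in C^\infty(\mathcal{Q}_{phy}(\widetilde{\epsilon}_1))$, the third derivative is uniformly bounded on this set by $M := \sup_{P \in \mathcal{Q}_{phy}(\widetilde{\epsilon}_1)}\bigl|\partial^3 f(P)/\partial\Qvec^3\bigr|$.

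Substituting this pointwise bound and estimating the resulting triple product $|R_\C|_F |Q_\C^{(1)}-Q_\C^{(2)}|_F |P_\C|_F$ by H\"older's inequality in $L^3\times L^3 \times L^3$ followed by the Sobolev embedding $\mathcal{H}^1(\Omega) \hookrightarrow L^3(\Omega;S)$ (as in the proof of Lemma \ref{Lipschitz LDG}, with constant $C_S(3)$) yields
\begin{align*}
&\bigl|D\mathcal{B}_{BM}(\widetilde{Q}_\C^{(1)}; R_\C,P_\C) - D\mathcal{B}_{BM}(\widetilde{Q}_\C^{(2)}; R_\C,P_\C)\bigr| \\
&\qquad\leq \frac{T}{2}\, M \, C_S^3(3) \,\norm{Q_\C^{(1)}-Q_\C^{(2)}}_1 \norm{R_\C}_1 \norm{P_\C}_1,
\end{align*}
which, on taking the supremum over $\norm{R_\C}_1 = \norm{P_\C}_1 = 1$, is the desired bound with the stated $L$. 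The restriction to $\mathcal{T}\in\mathbb{T}(\delta_0)$ enters only to guarantee via Lemma \ref{Interpolation estimateConforming}(ii) that $\I_\C Q \in \mathcal{Q}_{phy}(\widetilde{\epsilon}_1)$ so that $\mathcal{D}$ is non-empty and the interior of the segment remains in the physical regime. The only genuinely delicate point, compared to the LDG case, is this convexity/containment argument that keeps us away from the singular boundary $\partial\mathcal{Q}_{phy}$; the rest is a direct Taylor-type expansion combined with the same H\"older and Sobolev tools.
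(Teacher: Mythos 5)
Your proof is correct and follows essentially the same route as the paper: cancel the $\kappa$-terms, apply the integral (Taylor/fundamental-theorem) remainder formula to the Hessian of $f$ along the segment joining $\widetilde{Q}_\C^{(2)}$ to $\widetilde{Q}_\C^{(1)}$, invoke convexity of $\mathcal{Q}_{phy}(\widetilde{\epsilon}_1)$ to keep the segment in the physical regime, bound the third derivative uniformly, and finish with H\"older in $L^3\times L^3\times L^3$ and the Sobolev embedding $\mathcal{H}^1(\Omega)\hookrightarrow L^3(\Omega;S)$. Nothing of substance differs from the paper's argument.
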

\begin{proof}
Let $Q_\C^{(1)}, Q_\C^{(2)} \in \mathcal{D}$ and $R_\C,P_\C \in  {S}_\C^0$ with $\norm{R_\C}_1=1=\norm{P_\C}_1.$    The definition of $D\mathcal{B}_{BM}$ in \eqref{defn: BM lipschitz}, and a cancellation of the linear terms imply
\begin{align}\label{equn1: BM int1}
 & D\mathcal{B}_{BM}(\widetilde{Q}_\C^{(1)}; R_\C,P_\C)- D\mathcal{B}_{BM}(\widetilde{Q}_\C^{(2)}; R_\C,P_\C) \notag\\&=\frac{T  }{2} \int_\Omega R_\C : \Big( \frac{\partial^2  f(Q_\C^{(1)}+\I_\C Q)}{\partial \Qvec^2}- \frac{\partial^2  f(Q_\C^{(2)}+\I_\C Q)}{\partial \Qvec^2} \Big):\Pvec_\C  \dx
 \notag\\&
 =\frac{T  }{2}  \int_\Omega\sum_{i,j,k,l} R_{C,ij}  \Big( \frac{\partial^2  f (Q_\C^{(1)}+\I_\C Q)}{\partial \Qvec_{ij} \partial\Qvec_{kl} } -\frac{\partial^2  f (Q_\C^{(2)}+\I_\C Q)}{\partial \Qvec_{ij} \partial\Qvec_{kl} }  \Big)
  P_{C,kl}   \dx.
\end{align}
The convexity of $\mathcal{Q}_{phy}$ implies that for $\widetilde{Q}_\C^{(1)}, \widetilde{Q}_\C^{(2)} \in \mathcal{Q}_{phy}(\widetilde{\epsilon_1}),$ $ (t {Q}_\C^{(1)}+(1-t){Q}_\C^{(2)} +\I_\C Q) \in  \mathcal{Q}_{phy}(\widetilde{\epsilon_1})$ for $t\in [0,1]$.
Now the integral formula for the remainder term of the Taylor expansion,  the Hölder inequality, and the Sobolev embedding $\mathcal{H}^1(\Omega) \hookrightarrow L^3(\Omega;S)$ result in
\begin{align*}
    & \int_\Omega\sum_{i,j,k,l} R_{C,ij}  \Big( \frac{\partial^2  f (Q_\C^{(1)}+\I_\C Q)}{\partial \Qvec_{ij} \partial\Qvec_{kl} } -\frac{\partial^2  f (Q_\C^{(2)}+\I_\C Q)}{\partial \Qvec_{ij} \partial\Qvec_{kl} }  \Big)
  P_{C,kl}   \dx \notag\\& =\int_\Omega\sum_{i,j,k,l,m,n} R_{C,ij}  \int_0^1 \frac{\partial^3  f (t Q_\C^{(1)}+(1-t)Q_\C^{(2)} +\I_\C Q)}{\partial \Qvec_{ij} \partial\Qvec_{kl}\partial\Qvec_{mn} }( Q_{\C,mn}^{(1)} -Q_{\C,mn}^{(2)}) d {\rm t} \,
  P_{C,kl}   \dx \notag\\& =C_1\int_\Omega \sum_{i,j,k,l,m,n} \abs{R_{C,ij}} \abs{Q_{\C,mn}^{(1)} -Q_{\C,mn}^{(2)}} \abs{ P_{C,kl}}  \dx
\notag\\& \leq C_1 \norm{R_\C}_{L^3(\Omega)}\norm{Q_\C^{(1)} - Q_\C^{(2)}}_{L^3(\Omega)}\norm{P_\C}_{L^3(\Omega)}
 \leq C_1 C_S^3(3)\norm{Q_\C^{(1)} - Q_\C^{(2)}}_1 \norm{R_\C}_1  \norm{P_\C}_1,
\end{align*}
where $ C_1:= \sup_{P \in \mathcal{Q}_{phy}(\widetilde{\epsilon_1}) } \Big| \frac{\partial^3  f (P)}{\partial \Qvec^3 }\Big| $. 
This together with the unit norms $\norm{R_\C}_1=1=\norm{P_\C}_1$ applied to the above estimate  concludes the proof.
\end{proof}
\begin{lem}[Perturbation control of $D\mathcal{B}_{BM}$]\label{BM-inf-sup-relation} For the regular solution $Q\in \mathcal{H}^2(\Omega)\cap \mathcal{Q}_{phy}(\widetilde{\epsilon})$ and its interpolation $\I_\C Q \in \mathcal{Q}_{phy}(\widetilde{\epsilon_1})$ from Lemma  \ref{Interpolation estimateConforming}$(ii),$   and $ R_\C,P_\C \in  {S}_\C^0,$ it holds
    \begin{align*}
     &D\mathcal{B}_{BM}(\I_\C Q; R_\C,P_\C) -D\mathcal{B}_{BM}(Q; R_\C,P_\C) \leq  C_1 C_S^3(3) \norm{Q-\I_\C Q}_1\norm{R_\C}_1  \norm{P_\C}_1.
\end{align*}
Recall $ C_1:= \sup_{P \in  \mathcal{Q}_{phy}(\widetilde{\epsilon_1}) } \Big| \frac{\partial^3  f (P)}{\partial \Qvec^3 }\Big|$ from the proof of Lemma \ref{Lipschitz BM}.
\end{lem}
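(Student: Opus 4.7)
The plan is to mirror the argument of Lemma \ref{Lipschitz BM}, treating $\I_\C Q$ and $Q$ as two base points in the physical regime and estimating the difference of the Hessians of $f$ via a first-order Taylor expansion.

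First I would subtract the two expressions using the definition \eqref{defn: BM lipschitz}. The linear $-\kappa$ terms cancel, so that
\begin{align*}
& D\mathcal{B}_{BM}(\I_\C Q; R_\C,P_\C) - D\mathcal{B}_{BM}(Q; R_\C,P_\C)\\
&\quad = \frac{T}{2}\int_\Omega \sum_{i,j,k,l} R_{\C,ij}\,\Bigl(\frac{\partial^2 f(\I_\C Q)}{\partial Q_{ij}\,\partial Q_{kl}} - \frac{\partial^2 f(Q)}{\partial Q_{ij}\,\partial Q_{kl}}\Bigr)\, P_{\C,kl}\,\dx.
\end{align*}

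Next I would invoke the convexity of $\mathcal{Q}_{phy}$: since $Q \in \mathcal{Q}_{phy}(\widetilde{\epsilon})\subset \mathcal{Q}_{phy}(\widetilde{\epsilon}_1)$ and $\I_\C Q \in \mathcal{Q}_{phy}(\widetilde{\epsilon}_1)$ by Lemma \ref{Interpolation estimateConforming}$(ii)$, the segment $tQ+(1-t)\I_\C Q$ stays inside $\mathcal{Q}_{phy}(\widetilde{\epsilon}_1)$ for every $t\in[0,1]$, where $f$ is smooth. I would then apply the integral form of the mean value theorem (Taylor remainder) to obtain
\begin{align*}
\frac{\partial^2 f(\I_\C Q)}{\partial Q_{ij}\,\partial Q_{kl}} - \frac{\partial^2 f(Q)}{\partial Q_{ij}\,\partial Q_{kl}} = \sum_{m,n}\int_0^1 \frac{\partial^3 f\bigl(tQ + (1-t)\I_\C Q\bigr)}{\partial Q_{ij}\,\partial Q_{kl}\,\partial Q_{mn}}\,(\I_\C Q - Q)_{mn}\,d{\rm t}.
\end{align*}
Taking absolute values inside the integral and pulling out the uniform bound $C_1:=\sup_{P\in\mathcal{Q}_{phy}(\widetilde{\epsilon}_1)}\bigl|\partial^3 f(P)/\partial Q^3\bigr|$ gives a pointwise estimate of the integrand by $C_1\,|R_\C|_F\,|\I_\C Q - Q|_F\,|P_\C|_F$ (times a combinatorial factor absorbed in $C_1$).

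Finally, I would apply the generalized Hölder inequality with three $L^3$ factors and the Sobolev embedding $\mathcal{H}^1(\Omega)\hookrightarrow L^3(\Omega;S)$ (constant $C_S(3)$) to control each factor by its $\mathcal{H}^1$-norm, yielding
\begin{align*}
\int_\Omega |R_\C|_F\,|\I_\C Q - Q|_F\,|P_\C|_F\,\dx \leq C_S(3)^3\,\norm{R_\C}_1\,\norm{Q-\I_\C Q}_1\,\norm{P_\C}_1,
\end{align*}
which, combined with the factor $C_1$ and absorbed constants, is exactly the claimed bound $C_1\,C_S^3(3)\,\norm{Q-\I_\C Q}_1\,\norm{R_\C}_1\,\norm{P_\C}_1$. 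The main (minor) obstacle is verifying that the interpolating segment remains strictly inside the physical regime so that the third derivative of $f$ is uniformly controlled; this is handled by convexity of $\mathcal{Q}_{phy}(\widetilde{\epsilon}_1)$ together with Lemma \ref{Interpolation estimateConforming}$(ii)$.
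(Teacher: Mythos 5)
Your proposal takes essentially the same approach as the paper. The paper's proof is terse: it reduces the difference to $\frac{T}{2}\int_\Omega R_\C : \bigl(\partial^2 f(\I_\C Q)/\partial Q^2 - \partial^2 f(Q)/\partial Q^2\bigr) : P_\C \dx$ and then refers to "a similar argumentation as in \eqref{equn1: BM int1}," meaning exactly the Taylor-remainder integral formula, the uniform bound $C_1$ on $\partial^3 f$ over $\mathcal{Q}_{phy}(\widetilde{\epsilon}_1)$ (justified by convexity of that set), Hölder with three $L^3$ factors, and the Sobolev embedding $\mathcal{H}^1\hookrightarrow L^3$ — the very steps you spell out explicitly, including the correct observation that $\mathcal{Q}_{phy}(\widetilde{\epsilon})\subset\mathcal{Q}_{phy}(\widetilde{\epsilon}_1)$ since $\widetilde{\epsilon}_1<\widetilde{\epsilon}$.
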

\begin{proof}
For the BM potential, $\mathcal{B}=\mathcal{B}_{BM}$,
\begin{align*}
    D\mathcal{B}_{BM}(\I_\C Q; R_\C,P_\C) -D\mathcal{B}_{BM}(Q; R_\C,P_\C)= \frac{T   }{2}  \int_\Omega R_\C:  \Big(\frac{\partial^2  f(\I_\C Q)}{\partial \Qvec^2}-\frac{\partial^2  f(Q)}{\partial \Qvec^2} \Big):\Pvec_\C   \dx .
\end{align*}
Now a similar argumentation as in \eqref{equn1: BM int1}, using the integral formula for the remainder term of the Taylor expansion, the Hölder's  inequality, and the Sobolev embedding $\mathcal{H}^1(\Omega) \hookrightarrow L^3(\Omega;S)$ imply
\begin{align*}
    \int_\Omega R_\C:  \Big(\frac{\partial^2  f(\I_\C Q)}{\partial \Qvec^2}-\frac{\partial^2  f(Q)}{\partial \Qvec^2} \Big):\Pvec_\C  \dx &=\int_\Omega\sum_{i,j,k,l} R_{C,ij} \Big( \frac{\partial^2  f (\I_\C Q)}{\partial \Qvec_{ij} \partial\Qvec_{kl} } -\frac{\partial^2 f (Q)}{\partial \Qvec_{ij} \partial\Qvec_{kl} } \Big)  P_{C,kl}  \dx \notag\\& 
    \leq  C_1 C_S^3(3)   \norm{Q-\I_\C Q  }_{1} \norm{R_\C}_{1} \norm{P_\C}_{1}.
\end{align*}
This completes the proof.
\end{proof}
\begin{lem}[Perturbation control of $\mathcal{B}_{BM}$]\label{perturbation B-BM}
For the regular solution $Q\in \mathcal{H}^2(\Omega) \cap \mathcal{Q}_{phy}(\widetilde{\epsilon})$ and its interpolation $\I_\C Q  \in  \mathcal{Q}_{phy}(\widetilde{\epsilon_1})$ from Lemma  \ref{Interpolation estimateConforming}$(ii),$ and $ P_\C \in  {S}_\C^0,$ it holds
    \begin{align*}
    \mathcal{B}_{BM}(\I_\C Q,P_\C) -\mathcal{B}_{BM}( Q,P_\C)\leq C_B \norm{\I_\C Q - Q}_1 \norm{P_\C}_1 ,
\end{align*}
where $C_B:=\kappa + \frac{T}{2} \sup_{P \in \mathcal{Q}_{phy}(\widetilde{\epsilon_1})} \Big| \frac{\partial^2  f (P)}{\partial \Qvec^2 } \Big|  .$
\end{lem}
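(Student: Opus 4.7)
The plan is to imitate Lemma~\ref{perturbation B-LDG} but, since the BM potential $f$ is not polynomial, to replace the explicit third-order expansion by a first-order Taylor expansion of $\partial f/\partial Q$ with integral remainder involving $\partial^2 f/\partial Q^2$. First I would use the definition of $\mathcal{B}_{BM}$ from \eqref{nonlinear pde} to split the difference into a singular and a linear contribution,
\begin{align*}
\mathcal{B}_{BM}(\I_\C Q, P_\C) - \mathcal{B}_{BM}(Q, P_\C) = \frac{T}{2} \int_\Omega \Big(\frac{\partial f(\I_\C Q)}{\partial Q} - \frac{\partial f(Q)}{\partial Q}\Big) : P_\C \dx - \kappa \int_\Omega (\I_\C Q - Q) : P_\C \dx.
\end{align*}
The linear $\kappa$ term is handled directly by the Cauchy--Schwarz inequality together with the bound $\|\cdot\|_{L^2(\Omega)} \leq \|\cdot\|_1$, producing $\kappa\,\|\I_\C Q - Q\|_1\,\|P_\C\|_1$.

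For the nonlinear term, I would apply the integral form of Taylor's theorem pointwise in $\Omega$, writing
\begin{align*}
\frac{\partial f(\I_\C Q(x))}{\partial Q} - \frac{\partial f(Q(x))}{\partial Q} = \int_0^1 \frac{\partial^2 f\big(Q(x) + t(\I_\C Q(x) - Q(x))\big)}{\partial Q^2} : (\I_\C Q(x) - Q(x))\, dt.
\end{align*}
The crucial step, exactly as in the proof of Lemma~\ref{Lipschitz BM}, is to certify that the segment $\{Q(x) + t(\I_\C Q(x) - Q(x)) : t \in [0,1]\}$ remains inside the open set $\mathcal{Q}_{phy}(\widetilde{\epsilon}_1)$ where $f$ is $C^\infty$. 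Since $\widetilde{\epsilon}_1 = \widetilde{\epsilon} - \epsilon_1 < \widetilde{\epsilon}$ by Lemma~\ref{Interpolation estimateConforming}(ii), we have $Q \in \mathcal{Q}_{phy}(\widetilde{\epsilon}) \subset \mathcal{Q}_{phy}(\widetilde{\epsilon}_1)$ and $\I_\C Q \in \mathcal{Q}_{phy}(\widetilde{\epsilon}_1)$; the convexity of $\mathcal{Q}_{phy}(\widetilde{\epsilon}_1)$ (an intersection of sub/super-level sets of the convex map $\lambda_{\max}$ and concave map $\lambda_{\min}$ on symmetric matrices) then forces the entire segment to lie in $\mathcal{Q}_{phy}(\widetilde{\epsilon}_1)$.

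Once this physicality is secured, I would bound the integrand uniformly by $\sup_{P \in \mathcal{Q}_{phy}(\widetilde{\epsilon}_1)} |\partial^2 f(P)/\partial Q^2|$, which is finite because the supremum is attained on the compact closure of $\mathcal{Q}_{phy}(\widetilde{\epsilon}_1)$, still strictly separated from $\partial\mathcal{Q}_{phy}$. Applying Cauchy--Schwarz over $\Omega$ and $\|\cdot\|_{L^2(\Omega)} \leq \|\cdot\|_1$ then yields $(T/2)\sup|\partial^2 f/\partial Q^2|\,\|\I_\C Q - Q\|_1\,\|P_\C\|_1$ for the nonlinear part. Adding the two estimates produces the claimed bound with $C_B$ as stated. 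The only delicate issue is the convexity/physicality argument for the Taylor path; beyond this the proof is a direct and shorter imitation of Lemma~\ref{Lipschitz BM}.
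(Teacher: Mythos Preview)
Your proposal is correct and follows essentially the same route as the paper: split $\mathcal{B}_{BM}(\I_\C Q,P_\C)-\mathcal{B}_{BM}(Q,P_\C)$ into the linear $\kappa$-term and the nonlinear $f$-term, bound the former by Cauchy--Schwarz, and control the latter via the integral Taylor remainder together with the uniform Hessian bound on $\mathcal{Q}_{phy}(\widetilde{\epsilon}_1)$. Your explicit justification that the segment $tQ+(1-t)\I_\C Q$ stays in $\mathcal{Q}_{phy}(\widetilde{\epsilon}_1)$ by convexity is in fact more carefully stated than in the paper's own proof.
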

\begin{proof}
 The definition of $\mathcal{B}_{BM}$ from \eqref{nonlinear pde} and a rearrangement of terms lead to
\begin{align}\label{equn1: BM int}
      & \mathcal{B}_{BM}(\I_\C Q,P_\C) -\mathcal{B}_{BM}( Q,P_\C) \notag\\&= 
       \frac{T }{2}  \int_\Omega \Big(\frac{\partial f (\I_\C Q)}{\partial \Qvec }-\frac{\partial f ( Q)}{\partial \Qvec } \Big): P_\C \, d\mathbf{x}  - \kappa \int_\Omega  (\I_\C Q-Q): P_\C \, d\mathbf{x}=:T_1+T_2 .
   \end{align}
 The Cauchy-Schwarz inequality, and the Sobolev embedding imply
\begin{align*}
 T_1 \leq  \kappa    \norm{Q-\I_\C Q}_{L^2(\Omega)}\norm{P_\C}_{L^2(\Omega)}\leq  \kappa    \norm{Q-\I_\C Q}_{1}\norm{P_\C}_{1}.
\end{align*}
Define $C_2:=\displaystyle \sup_{P \in \mathcal{Q}_{phy}(\widetilde{\epsilon_1})} \Big| \frac{\partial^2  f (P)}{\partial \Qvec^2 } \Big|$. The integral formula for the remainder term of the Taylor expansion and  the Cauchy-Schwarz inequality result in
\begin{align*}
 T_2  &=\frac{T }{2}  \int_\Omega
    \sum_{i,j,k,l=1}^3 \int_0^1 \frac{\partial^2 f (t \I_\C Q +(1-t)Q)}{\partial \Qvec_{ij} \partial \Qvec_{kl} } (\I_\C Q_{ij}- Q_{ij}) \, d {\rm t} \, P_{\C,kl} \dx
 \\&   \leq \frac{T C_2 }{2}   \norm{\I_\C Q- Q}_{L^2(\Omega)} \norm{P_\C}_{L^2(\Omega)} \leq \frac{T C_2 }{2} \norm{\I_\C Q- Q}_{1} \norm{P_\C}_{1}.
\end{align*}
A combination of the estimates for $T_1, T_2$ in \eqref{equn1: BM int} completes the proof.
\end{proof}
\section{Existence and local uniqueness of discrete solution}\label{existence-uniqueness}
In this section, we present our main result on the existence, local uniqueness of the discrete solution, and the {\it a priori} error estimates.
\begin{thm}\label{thm: existance-uniqueness}
   Given a regular solution $Q\in \mathcal{H}^2(\Omega) $ of \eqref{nonlinear pde}  with $Q_b \in H^{3/2}(\partial \Omega;S)$, and suitably restricted values of  the elastic constants $L_1, L_2, L_3$ as $  0<L_1, -L_1< L_3< 2L_1, \text{ and } -\frac{3}{5} L_1 -\frac{1}{10} L_3 <L_2,$  there exists a $\delta >0$ and a constant $C_{q}$ such that  for all $\mathcal{T}\in \mathbb{T}(\delta),$ there exists a unique local discrete solution $Q_\C \in  {S}_\C$ to \eqref{discrete problem} near $Q$ and satisfies the error estimates
    \begin{align*}
        \norm{Q - Q_\C}_1 \leq C_q \norm{Q - \I_\C Q}_1 \approx O(h),
    \end{align*}
    where 
    $$C_q:=\begin{cases}
1+ 2(C_E+ a+bC_S^3(3)(2+C_I )\norm{\Qvec}_{\mathcal{H}^2(\Omega)}  +2c C_S^4(4) (3 +2C_I^2)\norm{\Qvec}_{\mathcal{H}^2(\Omega)}^2)/\beta_1&\text{ for LDG potential},\\
     1+  2 \Big(C_E+\kappa + \frac{T}{2} \sup_{P \in \mathcal{Q}_{phy}(\widetilde{\epsilon_1})} \Big| \frac{\partial^2  f (P)}{\partial \Qvec^2 } \Big| \Big) /\beta_1 & \text{ for BM potential}.
    \end{cases}$$
Recall  $\beta_1 $ from Theorem \ref{thm: dis-inf-sup}, $C_E  $ from Lemma \ref{coercivity-elstic-energy}, $C_I$ from Lemma \ref{Interpolation estimateConforming}, $C_S$ is a constant from Sobolev embedding results. 
\end{thm}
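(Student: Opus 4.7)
The natural strategy is to apply the Newton--Kantorovich theorem (equivalently, a Banach fixed-point argument) to an affine perturbation of the discrete linearization around $Q$. I would substitute $Q_\C = \I_\C Q + \phi^\ast$ and seek the correction $\phi^\ast \in S_\C^0$ satisfying
\[
 \mathcal{A}(\phi^\ast, R_\C) + \mathcal{B}(\I_\C Q + \phi^\ast, R_\C) = -\mathcal{A}(\I_\C Q, R_\C) \qquad \forall R_\C \in S_\C^0.
\]
To exploit the well-posedness of the linearized operator at $Q$ (Theorem~\ref{thm: dis-inf-sup}), I would then define the map $\mu: S_\C^0 \to S_\C^0$ implicitly by
\[
 \mathcal{A}(\mu(\phi), R_\C) + D\mathcal{B}(Q; \mu(\phi), R_\C) = \mathcal{A}(\phi, R_\C) + D\mathcal{B}(Q; \phi, R_\C) - \dual{N(\I_\C Q + \phi), R_\C}
\]
for all $R_\C \in S_\C^0$. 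By the discrete inf-sup condition of Theorem~\ref{thm: dis-inf-sup}, $\mu$ is well-defined for any $\phi \in S_\C^0$ (and, for the BM case, for $\phi$ inside the physical subset $\mathcal{D}$ of Lemma~\ref{Lipschitz BM}). Fixed points of $\mu$ are exactly the correctors $\phi^\ast$ we seek.

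The core of the argument is to verify the hypotheses of the Banach fixed-point theorem on a small closed ball $\overline{B(0,r)} \subset S_\C^0$. For the \emph{self-mapping estimate}, I would evaluate at $\phi = 0$ and rewrite the right-hand side using the continuous Euler--Lagrange equation $\mathcal{A}(Q, R_\C) + \mathcal{B}(Q, R_\C) = 0$ to obtain
\[
 \mathcal{A}(\mu(0), R_\C) + D\mathcal{B}(Q; \mu(0), R_\C) = \mathcal{A}(Q - \I_\C Q, R_\C) + \mathcal{B}(Q, R_\C) - \mathcal{B}(\I_\C Q, R_\C).
\]
Bounding the right-hand side by Lemma~\ref{coercivity-elstic-energy}(i) (for the elastic part) and by Lemmas~\ref{perturbation B-LDG}/\ref{perturbation B-BM} (for the bulk part) and then invoking $\beta_1$ from Theorem~\ref{thm: dis-inf-sup}, one gets $\|\mu(0)\|_1 \leq (C_E + C_B)\beta_1^{-1} \|Q - \I_\C Q\|_1$. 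For the \emph{contraction estimate}, subtracting the defining identities for $\mu(\phi_1)$ and $\mu(\phi_2)$ and writing $N(\I_\C Q + \phi_1) - N(\I_\C Q + \phi_2)$ as an integrated Fréchet derivative along $\widetilde{Q}_s := \I_\C Q + \phi_2 + s(\phi_1 - \phi_2)$, I would split
\[
 D\mathcal{B}(Q) - D\mathcal{B}(\widetilde{Q}_s) = \bigl( D\mathcal{B}(Q) - D\mathcal{B}(\I_\C Q) \bigr) + \bigl( D\mathcal{B}(\I_\C Q) - D\mathcal{B}(\widetilde{Q}_s) \bigr)
\]
and control the first summand by Lemmas~\ref{LDG inf-sup-relation}/\ref{BM-inf-sup-relation} and the second by the Lipschitz continuity in Lemmas~\ref{Lipschitz LDG}/\ref{Lipschitz BM}. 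Using Theorem~\ref{thm: dis-inf-sup} again, this yields
\[
 \|\mu(\phi_1) - \mu(\phi_2)\|_1 \leq \beta_1^{-1} \bigl( L \max(\|\phi_1\|_1, \|\phi_2\|_1) + C_L \|Q - \I_\C Q\|_1 \bigr) \|\phi_1 - \phi_2\|_1.
\]

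Next I would fix $r := 2(C_E + C_B) \beta_1^{-1} \|Q - \I_\C Q\|_1$ and choose $\delta > 0$ small enough that both $L r < \tfrac{1}{4} \beta_1$ and $C_L \|Q - \I_\C Q\|_1 < \tfrac{1}{4} \beta_1$ hold for every $\mathcal{T} \in \mathbb{T}(\delta)$; this uses $\|Q - \I_\C Q\|_1 \lesssim h \|Q\|_{\mathcal{H}^2(\Omega)}$ from Lemma~\ref{Interpolation estimateConforming}(i). On $\overline{B(0,r)}$, the map $\mu$ is then a $\tfrac{1}{2}$-contraction with $\|\mu(0)\|_1 \leq r/2$, so Banach's theorem delivers a unique fixed point $\phi^\ast \in \overline{B(0,r)}$, which gives the required $Q_\C$. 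The claimed error bound follows by the triangle inequality,
\[
 \|Q - Q_\C\|_1 \leq \|Q - \I_\C Q\|_1 + \|\phi^\ast\|_1 \leq \bigl(1 + 2(C_E + C_B)/\beta_1\bigr) \|Q - \I_\C Q\|_1,
\]
which matches the constant $C_q$ in the statement once $C_B$ is specialized to the LDG or BM case via Lemmas~\ref{perturbation B-LDG} and \ref{perturbation B-BM}. Local uniqueness in a $\|\cdot\|_1$-ball around $Q$ is a direct consequence of the uniqueness of the Banach fixed point.

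The main obstacle is the BM case, where all the perturbation and Lipschitz bounds require the arguments of $D\mathcal{B}_{BM}$ to lie in the physical regime $\mathcal{Q}_{phy}(\widetilde{\epsilon}_1)$. To apply Lemma~\ref{Lipschitz BM} along the segment $\widetilde{Q}_s = \I_\C Q + \phi_2 + s(\phi_1 - \phi_2)$ for $s \in [0,1]$ and $\phi_1, \phi_2 \in \overline{B(0,r)}$, I must ensure that the ball radius $r$ is compatible with the admissible set $\mathcal{D}$ defined in Lemma~\ref{Lipschitz BM}; the Sobolev embedding $\mathcal{H}^1(\Omega) \hookrightarrow L^\infty$ only fails for $d=3$, so this embedding is not available and one must instead control pointwise eigenvalues by the physicality of $\I_\C Q$ from Lemma~\ref{Interpolation estimateConforming}(ii) together with a further smallness of $h$ (via the convexity of $\mathcal{Q}_{phy}$ and Weyl's inequality applied to $\widetilde{Q}_s$). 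This dictates the asymptotic regime of $\delta$ beyond what is needed for the LDG potential and explains the extra constraint announced in the introduction.
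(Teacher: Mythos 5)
Your proposal is correct and, in substance, follows the same route as the paper: a fixed-point/Kantorovich argument centered at the interpolant $\I_\C Q$, driven by the discrete inf-sup condition of Theorem~\ref{thm: dis-inf-sup}, the Lipschitz continuity of $D\mathcal{B}$ (Lemmas~\ref{Lipschitz LDG}, \ref{Lipschitz BM}), the perturbation controls for $D\mathcal{B}$ (Lemmas~\ref{LDG inf-sup-relation}, \ref{BM-inf-sup-relation}) and for $\mathcal{B}$ (Lemmas~\ref{perturbation B-LDG}, \ref{perturbation B-BM}), and the interpolation estimates (Lemma~\ref{Interpolation estimateConforming}). The two small structural differences from the paper are packaging rather than substance: you run a Banach contraction with a \emph{frozen} reference operator $\mathcal{A}(\cdot,\cdot)+D\mathcal{B}(Q;\cdot,\cdot)$ evaluated at the exact solution, which lets you use the inf-sup constant $\beta_1$ from Theorem~\ref{thm: dis-inf-sup} directly, at the cost of pushing the perturbation $D\mathcal{B}(Q)-D\mathcal{B}(\I_\C Q)$ into the contraction estimate; the paper instead invokes the Newton--Kantorovich theorem with the operator $D\mathcal{N}(0)=DN(\I_\C Q)$, handling that same perturbation in Step~2 when bounding $\|D\mathcal{N}(0)^{-1}\|$ by $2/\beta_1$. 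Either bookkeeping yields the same quasi-best-approximation constant $C_q = 1 + 2(C_E+C_B)/\beta_1$ once $C_B$ is specialized to the two potentials and $h\le 1$ is used, so this is not a genuinely different proof.

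One thing you flag deserves credit: in the BM case, the ball condition for the contraction (respectively, the Kantorovich ball condition in the paper's Step~4) requires that the iterates $\I_\C Q + \phi$ stay in $\mathcal{Q}_{phy}(\widetilde{\epsilon}_1)$, and the paper's Step~4 only verifies $\|P_\C\|_1<1$ without spelling out this physicality constraint. You are right that $\mathcal{H}^1\not\hookrightarrow L^\infty$ in three dimensions, so a small $H^1$-bound on $\phi$ does not automatically yield pointwise eigenvalue control; however, convexity of $\mathcal{Q}_{phy}$ together with Weyl's inequality, as you propose, still needs an $L^\infty$ estimate on $\phi$. The ingredient that actually closes this is a discrete inverse estimate $\|\phi\|_{L^\infty}\lesssim h^{-1/2}\|\phi\|_1$ on $S^0_\C$, which combined with $\|\phi\|_1\lesssim h$ from the ball radius gives $\|\phi\|_{L^\infty}\lesssim h^{1/2}\to0$, and only then does Weyl's inequality applied to $\widetilde{Q}_s$ around $\I_\C Q$ give physicality of $\widetilde{Q}_s$ for $h$ small. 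Making that step explicit would strengthen both your argument and the paper's.
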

The proof uses the Kantorovich theorem. We now state and verify the assumptions of this theorem in our discrete setup.
  \begin{thm}[Kantorovich]\cite{Kantorovich_1948,zeidler}\label{Kantorovich theorem}
		Let $Z,Y$ be Banach spaces, and  $L(Y, Z)$ denote the Banach space of bounded linear operators of $Y$ into $Z$. 	Suppose that the mapping $\mathscr{N} : {\rm D} \subset Z \rightarrow Y$ is Fr\'echet differentiable on an open convex set ${\rm D}$, and the derivative  
			$D\mathscr{N}(\cdot)$ is Lipschitz continuous on ${\rm D}$ with Lipschitz constant $L$. 
			For a fixed starting point $x^0 \in {\rm D}, $ the inverse $D\mathscr{N}(x^0)^{-1}$ exists as a continuous operator on $Z.$ The real numbers $a_1$ and $b_1$ are chosen such that 
			\begin{align}\label{eq:4.9}
			\norm{D\mathscr{N}(x^0)^{-1}}_{L(Y;Z )} \leq a_1 \quad \text{and} \quad \norm{D\mathscr{N}(x^0)^{-1}\mathscr{N}(x^0)}_{Z} \leq b_1
			\end{align}
			and $h^*:=a_1 b_1 L\leq \frac{1}{2}.$ Suppose, the first approximation $x^1:= {x^0}- D\mathscr{N}(x^0)^{-1}\mathscr{N}(x^0) $ has the property that the closed ball $\overline{B}_Z(x^1,r):= \{x\in Z|\, \norm{x-x^1}_Z \leq r  \}$ lies within the domain of definition ${\rm D},$  where $r= \frac{1-\sqrt{1-2h^*}}{a_1 L}-b_1.$
			Then the following are true.
			\begin{enumerate}
				\item {Existence and uniqueness}. There exists a solution, $x\in \overline{B}_Z(x^1,r)$ to $\mathscr{N}(x)=0,$ and the solution is unique on $\overline{B}_Z(x^0,r^*)\cap {\rm D},$ that is, on a suitable neighborhood of the initial point, ${x^0}$, with $r^*=\frac{1+\sqrt{1-2h^*}}{a_1 L}. $ 
				\item Convergence of Newton's method. The Newton's scheme with initial iterate, $x^0,$ leads to a sequence,  $x^n:= {x^{n-1}}- D\mathscr{N}(x^{n-1})^{-1}\mathscr{N}(x^{n-1}) $, in $\overline{B}_Z(x^0,r^*)$, which converges to, $x,$ with error bound $	\norm{x^n-x}_{Z}\leq \frac{(1-(1-2h^*)^{\frac{1}{2}})^{2^n}}{{2^n}a_1 L}, \quad n=0,1 \dots.$
			\end{enumerate}
		\end{thm}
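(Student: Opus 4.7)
The plan is to apply the Kantorovich theorem (Theorem~\ref{Kantorovich theorem}) to a nonlinear map on the discrete correction space $ {S}_\C^0$, using the interpolant $\I_\C Q$ as the initial guess. Any candidate discrete solution can be written as $Q_\C = \I_\C Q + \tilde Q_\C$ with $\tilde Q_\C \in  {S}_\C^0$, and I would define $\mathscr{N} :  {S}_\C^0 \to ( {S}_\C^0)^*$ by $\dual{\mathscr{N}(\tilde Q_\C),P_\C} := \dual{N(\I_\C Q + \tilde Q_\C),P_\C}$ for $P_\C \in  {S}_\C^0$, so that a zero of $\mathscr{N}$ corresponds exactly to a solution of \eqref{discrete problem}. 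I equip $ {S}_\C^0$ with $\norm{\cdot}_1$ and take the initial iterate $x^0 := 0$.

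The three Kantorovich hypotheses are verified as follows. For the invertibility of $D\mathscr{N}(0) = DN(\I_\C Q)$, Theorem~\ref{thm: dis-inf-sup} provides a discrete inf-sup constant $\beta_1$ for the linearization at the exact solution $Q$; to transfer this to $\I_\C Q$, I would use Lemma~\ref{LDG inf-sup-relation} (respectively Lemma~\ref{BM-inf-sup-relation}) to estimate $|D\mathcal{B}(\I_\C Q;R_\C,P_\C) - D\mathcal{B}(Q;R_\C,P_\C)| \leq C\norm{\I_\C Q - Q}_1 \norm{R_\C}_1 \norm{P_\C}_1 = O(h)$, and absorb this perturbation into the smallness of $\delta$ so that $DN(\I_\C Q)$ satisfies an inf-sup condition with constant, say, $\beta_1/2$, yielding $a_1 := 2/\beta_1$. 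For the residual, since $\dual{N(Q),P_\C} = 0$ for $P_\C \in  {S}_\C^0 \subset \mathcal{H}_0^1(\Omega)$, one has $\dual{\mathscr{N}(0),P_\C} = \mathcal{A}(\I_\C Q - Q,P_\C) + \mathcal{B}(\I_\C Q,P_\C) - \mathcal{B}(Q,P_\C)$; boundedness of $\mathcal{A}$ (Lemma~\ref{coercivity-elstic-energy}) together with Lemmas~\ref{perturbation B-LDG} and \ref{perturbation B-BM} give $b_1 \leq (C_E + C_B) \norm{\I_\C Q - Q}_1$, which is of order $h$ by Lemma~\ref{Interpolation estimateConforming}(i). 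Lipschitz continuity of $D\mathscr{N}$ on a ball around $0$ with constant $L$ independent of $h$ follows directly from Lemmas~\ref{Lipschitz LDG} and \ref{Lipschitz BM}.

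With these three bounds in hand, $h^* = a_1 b_1 L = O(h)$, so choosing $\delta$ small enough forces $h^* \leq 1/2$. The Kantorovich radius $r = (1 - \sqrt{1 - 2h^*})/(a_1 L) - b_1$ is then of the same order as $b_1 = O(h)$, and Theorem~\ref{Kantorovich theorem} supplies a unique $\tilde Q_\C \in \overline B_{ {S}_\C^0}(x^1,r)$ with $\mathscr{N}(\tilde Q_\C) = 0$, giving the discrete solution $Q_\C := \I_\C Q + \tilde Q_\C$. The a priori estimate follows by the triangle inequality: $\norm{Q - Q_\C}_1 \leq \norm{Q - \I_\C Q}_1 + \norm{\tilde Q_\C}_1 \leq \norm{Q - \I_\C Q}_1 + \norm{x^1}_1 + r \leq \norm{Q - \I_\C Q}_1 + 2b_1 \leq (1 + 2(C_E + C_B)/\beta_1)\norm{Q - \I_\C Q}_1$, matching the stated constant $C_q$ (with $C_B$ from Lemmas~\ref{perturbation B-LDG}/\ref{perturbation B-BM}, where the $h$-dependent factors are absorbed using $h \leq 1$). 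The convergence rate $O(h)$ then follows from Lemma~\ref{Interpolation estimateConforming}(i) and the $\mathcal{H}^2$ regularity of $Q$.

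The main obstacle, specific to the BM potential, is ensuring that the entire Kantorovich ball lies within the admissible set $\mathcal{D} = \{R_\C \in B_{ {S}_\C^0}(0,1) : R_\C + \I_\C Q \in \mathcal{Q}_{phy}(\widetilde\epsilon_1)\}$ of Lemma~\ref{Lipschitz BM}, whose defining constraint is pointwise rather than in $\norm{\cdot}_1$. Lemma~\ref{Interpolation estimateConforming}(ii) places $\I_\C Q$ in $\mathcal{Q}_{phy}(\widetilde\epsilon_1)$ with margin $\widetilde\epsilon_1 = \widetilde\epsilon - O(h^{1/2})$, but converting an $\norm{\cdot}_1$ bound on the correction into a pointwise bound requires a supplementary argument (combining an inverse estimate on the discrete space with the superconvergent $L^2$ bound that follows from an Aubin--Nitsche duality argument, or a direct $L^\infty$ interpolation bound). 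The resulting pointwise control forces an additional smallness condition on $\delta$, which together with the inf-sup threshold $\delta_1$ from Theorem~\ref{thm: dis-inf-sup} and the physicality threshold $\delta_0$ from Lemma~\ref{Interpolation estimateConforming}(ii) determines the final admissible range for $\delta$; this interplay between the singular domain $\mathcal{Q}_{phy}$, the pointwise admissibility of iterates, and the discrete energy estimates is the essential new feature beyond the classical LDG analysis.
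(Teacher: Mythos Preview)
Your proposal is not a proof of the stated theorem. Theorem~\ref{Kantorovich theorem} is the classical Newton--Kantorovich theorem, which the paper quotes from \cite{Kantorovich_1948,zeidler} without proof; it is a tool, not a result the paper establishes. What you have written is instead a sketch of the proof of Theorem~\ref{thm: existance-uniqueness}, which \emph{applies} Kantorovich to the discrete map $\mathcal{N}(R_\C)=N(R_\C+\I_\C Q)$.

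Read as a proof of Theorem~\ref{thm: existance-uniqueness}, your outline matches the paper's Steps~1--5 almost exactly: same map, same initial point $x^0=0$, same use of Theorem~\ref{thm: dis-inf-sup} plus Lemmas~\ref{LDG inf-sup-relation}/\ref{BM-inf-sup-relation} for $a_1=2/\beta_1$, same residual control via Lemmas~\ref{perturbation B-LDG}/\ref{perturbation B-BM} for $b_1$, same Lipschitz Lemmas~\ref{Lipschitz LDG}/\ref{Lipschitz BM}, and the same triangle inequality for the final $C_q$. One minor deviation: you bound $\norm{\mathscr{N}(0)}$ directly by $(C_E+C_B)\norm{\I_\C Q-Q}_1$ and call this $b_1$, whereas the paper calls this quantity $a_2$ and sets $b_1=a_1 a_2$; your final constant is nonetheless the same.

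Your last paragraph is more interesting than you may realize. The paper's Step~4 verifies the ball condition $\overline{B}_{S_\C^0}(Q_\C^1,r)\subset\mathcal{D}$ only by checking $\norm{P_\C}_1<1$, and does \emph{not} explicitly argue that $P_\C+\I_\C Q\in\mathcal{Q}_{phy}(\widetilde{\epsilon}_1)$ for the BM case. You correctly flag that this pointwise constraint does not follow from a small $\norm{\cdot}_1$ bound alone and would require an additional argument (inverse estimate, $L^\infty$ control, or similar). So while you have addressed the wrong statement, your critique of the ball condition in the singular case is a genuine observation about a point the paper leaves implicit.
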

  \medskip

For a suitable interpolation $\I_\C \Qvec $ of the regular solution  $Q\in \mathcal{H}^2(\Omega)$ of \eqref{nonlinear pde}, define the map 
$ \mathcal{N}:\mathcal{D}\subset  {S}_\C^0 \rightarrow ( {S}_\C^0)^*$  by 
\begin{align}\label{defn: N-tilde}
    \mathcal{N}(R_\C):=  N(R_\C+\I_\C Q) \text{  for all  }R_\C \in \mathcal{D}.
\end{align}
For the LDG bulk potential,  the interpolation operator is considered as in Lemma \ref{Interpolation estimateConforming}$(i),$ and the domain of definition of the map $\mathscr{N}(\bullet)$  is $ \mathcal{D}: =B_{ {S}_\C^0}(0,1):=\{P_C\in  {S}_\C^0: \, \norm{P_\C}_1 < 1   \}$. Whereas, in the case of singular potential, for a regular solution $\Qvec \in
\mathcal{Q}_{phy}(\widetilde{\epsilon}), \widetilde{\epsilon}>0$ of \eqref{nonlinear pde},  we consider the physical interpolation $\I_\C \Qvec  \in \mathcal{Q}_{phy}(\widetilde{\epsilon_1}), \widetilde{\epsilon_1}>0$ (see  Lemma \ref{Interpolation estimateConforming}$(ii)$). 
 Now   define  the translated set $\Tilde{\mathcal{D}}:=\mathcal{Q}_{phy}(\widetilde{\epsilon_1})- \I_\C \Qvec,$ 
 and choose   $\mathcal{D}:=\{R_\C \in B_{ {S}_\C^0} (0, 1)\subset {S}_\C^0 | \, R_\C +\I_\C \Qvec \in \mathcal{Q}_{phy}(\widetilde{\epsilon_1}) \}$. Since $\Tilde{\mathcal{D}}$ is a closed, bounded, and convex subset subset of $S,$ $\mathcal{D}$ is open, bounded and convex set in ${S}_\C^0.$ 

   We work on the mesh parameter regime $h< \delta_2:=\min(\delta_0,\delta_1).$ Recall $\delta_0$ from Lemma \ref{Interpolation estimateConforming}$(ii)$ and $\delta_1$ from Theorem \ref{thm: dis-inf-sup}. 
 With out loss of generality, for the case of LDG potential, we assume $\delta_0=1.$   We utilize this ($h<\delta_0=1$) in the bounds associated to the nonlinearity in Section \ref{LDG-potential-nonlinear}.

The Kantorovich theorem  is applied for the discrete map $\mathcal{N}.$ Now we proceed to verify the crucial assumptions  for $\mathcal{N}$ in Theorem \ref{Kantorovich theorem}.

\medskip

\noindent Step 1. {\bf (Lipschitz continuity of $D\mathcal{N}$)} For $Q_\C, R_\C,P_\C \in  {S}_\C^0,$  the Fr\'echet derivative of $\mathcal{N}$ at $Q_\C$ is defined as, 
\begin{align}\label{equn: Fderivative 1}
  &  D\mathcal{N}(Q_\C; R_\C,P_\C):=  D {N}(Q_\C+\I_\C Q; R_\C,P_\C)= \mathcal{A}(R_\C,P_\C) +D\mathcal{B}(Q_\C+\I_\C Q; R_\C,P_\C).
\end{align}
Recall the  Frech\'et derivatives for the  
 LDG bulk potential, $\mathcal{B}=\mathcal{B}_{LDG}$, from \eqref{defn: LDG lipschitz}, and for the  
 BM bulk potential, $\mathcal{B}=\mathcal{B}_{BM}$, from \eqref{defn: BM lipschitz}.
Note that for $Q_\C^{(1)},Q_\C^{(2)} \in \mathcal{D},$ a cancellation of the linear term $\mathcal{A}(\bullet,\bullet)$ leads to
\begin{align*}
    D\mathcal{N}(Q_\C^{(1)}; R_\C,P_\C) -D\mathcal{N}(Q_\C^{(2)}; R_\C,P_\C)=D\mathcal{B}(Q_\C^{(1)}; R_\C,P_\C) -D\mathcal{B}(Q_\C^{(2)}; R_\C,P_\C),
\end{align*}
and thus, the Lipschitz continuity of $ D\mathcal{N}$ is determined entirely by the Lipschitz continuity of the nonlinear operator 
$D\mathcal{B}.$ 
Therefore, Lemmas \ref{Lipschitz LDG} and \ref{Lipschitz BM} results in 
\begin{align*}
    \norm{ D\mathcal{N}(Q_\C^{(1)})-D\mathcal{N}(Q_\C^{(2)})}_{\mathcal{L}({( {S}_\C^0)}^*,  {S}_\C^0)}
    \leq L\norm{Q_\C^{(1)} -Q_\C^{(2)}}_{1} ,
\end{align*}
where the  Lipschitz constant $ \displaystyle L:=\begin{cases}
2b  C_S^3(3)+6c C_S^4(4)(1+(1+C_I ) \norm{Q}_{\mathcal{H}^2(\Omega)})   &\text{ for LDG potential}, \\
     \frac{T   C_S^3(3) }{2}  \sup_{P \in \mathcal{Q}_{phy}(\widetilde{\epsilon_1}) }\Big| \frac{\partial^3  f (P)}{\partial \Qvec^3 }\Big|  & \text{ for BM potential}.\\ 
\end{cases}$

\medskip 

\noindent Step 2. {\bf(Inf-sup constant}) For $R_\C, P_\C \in {S}_\C^0$ with $\norm{R_\C}_1=1=\norm{P_\C}_1,$ and a suitable choice of the interpolation $\I_\C Q$ as specified in \eqref{defn: N-tilde}, the definition of the Fr\'echet derivative  $D\mathcal{N}(0)$ from \eqref{equn: Fderivative 1} leads to
\begin{align}\label{inf-sup-relation}
    D\mathcal{N} (0; R_\C,P_\C)&=  D {N}( \I_\C Q; R_\C,P_\C) \notag\\&= D {N}( Q; R_\C,P_\C) +(D\mathcal{B}(\I_\C Q; R_\C,P_\C) -D\mathcal{B}(Q; R_\C,P_\C)).
\end{align}
The perturbation error control in the second term on the right hand side of \eqref{inf-sup-relation} is discussed in Lemmas \ref{LDG inf-sup-relation} and \ref{BM-inf-sup-relation} in  Section \ref{Analysis of nonlinearity}.  These estimates, Theorem \ref{thm: dis-inf-sup}, and Lemma \ref{Interpolation estimateConforming}$(i)$ infer
\begin{align*}
\beta_1 -  C_3 h \leq    D\mathcal{N} (0; R_\C,P_\C)
\end{align*}
with $C_3:=\begin{cases}
 C_I(2b C_S^3(3) + 3c (2+C_I ) C_S^4(4) \norm{Q}_{\mathcal{H}^2(\Omega)})   \norm{Q}_{\mathcal{H}^2(\Omega)}  & \text{ for LDG potential}, \\ \frac{T   C_S^3(3) C_I}{2}  \Big( \sup_{P \in  \mathcal{Q}_{phy}(\widetilde{\epsilon_1})} \Big| \frac{\partial^3  f (P)}{\partial \Qvec^3 }\Big| \Big)   \norm{ Q  }_{\mathcal{H}^2(\Omega)} & \text{ for BM potential.}
\end{cases}$\\
This concludes the invertibility of the the Fr\'echet derivative  $D\mathcal{N}(0)$   for all $\mathcal{T} \in \mathbb{T}(\delta)$ with $0< \delta \leq \delta_3:=\min{\Big(\delta_2, \beta_1/2C_3 \Big)}, $ and 
\begin{align}\label{equn: inf-sup-interpolation}
    \norm{D\mathcal{N}(0)^{-1}}_{\mathcal{L}({( {S}_\C^0)}^*,  {S}_\C^0)} \leq 2/\beta_1.
\end{align}
Therefore, the first estimate in \eqref{eq:4.9} holds for $a_1:=2/\beta_1.$

\medskip
\noindent Step 3. {\bf (residual control)} Now we establish that the residual $ \displaystyle  \norm{\mathcal{N} (0)}_{( {S}_\C^0)^*} =\sup_{P_\C \in B_{ {S}_\C^0}(0,1)} \mathcal{N}(0;P_\C)$ in  \eqref{eq:4.9} is small.
Recall the definition of $\mathcal{N} (\cdot)$ from \eqref{defn: N-tilde}, and use \eqref{nonlinear pde} to obtain 
\begin{align}\label{equn: residual1}
     \mathcal{N} (0; P_\C) &= \mathcal{A}(\I_\C Q,P_\C)+\mathcal{B}(\I_\C Q,P_\C) \notag \\&
     =\mathcal{A}(\I_\C Q - Q, P_\C) + (\mathcal{B}(\I_\C Q,P_\C) -\mathcal{B}( Q,P_\C)).
\end{align}
The perturbation error control for the nonlinear bulk term $\mathcal{B}(\bullet)$ is discussed in Lemma \ref{perturbation B-LDG} (resp. Lemma \ref{perturbation B-BM}) for LDG potential (resp. BM potential) in Section \ref{Analysis of nonlinearity}.
Under the assumption \eqref{cond-elas-const} on the elastic parameters, Lemma \ref{coercivity-elstic-energy}$(i)$, the Cauchy-Schwarz inequality with the  Sobolev bound in $H^1$-norm yield
\begin{align} \label{equn: residual3}
   & \mathcal{A}(\I_\C Q -Q,P_\C) \leq C_E \norm{\nabla (\I_\C Q -Q)}_{L^2(\Omega)} \norm{\nabla P_\C}_{L^2(\Omega)} \leq C_E\norm{Q-\I_\C Q}_{1}\norm{P_\C}_{1}.
   \end{align}
The above estimate, Lemma \ref{perturbation B-LDG}, and $\norm{P_\C}_1 \leq 1$ applied to \eqref{equn: residual1} imply
\begin{align*}
   \mathcal{N} (0; P_\C)= \mathcal{A}(\I_\C Q,P_\C)+\mathcal{B}_{LDG}(\I_\C Q,P_\C)   \leq (C_E +C_B) \norm{\I_\C Q - Q}_1 
\end{align*}
Recall $C_B:= a+bC_S^3(3)(2+C_I )\norm{\Qvec}_{\mathcal{H}^2(\Omega)}  +2c C_S^4(4) (3 +2C_I^2)\norm{\Qvec}_{\mathcal{H}^2(\Omega)}^2$   from Lemma \ref{perturbation B-LDG}.
Similarly, the treatment of the residual associated to the BM potential is obtained from Lemma \ref{perturbation B-BM} with $\norm{P_\C}_1 \leq 1$ and \eqref{equn: residual3}, as follows:
\begin{align*}
 \mathcal{N} (0; P_\C)= \mathcal{A}(\I_\C Q,P_\C)+\mathcal{B}_{BM}(\I_\C Q,P_\C) 
     \leq (C_E+C_B )\norm{\I_\C Q- Q}_{1} 
\end{align*}
with $C_B:=\kappa + \frac{T}{2} \sup_{P \in \mathcal{Q}_{phy}(\widetilde{\epsilon_1})} \Big| \frac{\partial^2  f (P)}{\partial \Qvec^2 } \Big| .$
A combination of the above two estimates  provide the residual bound for \eqref{eq:4.9} as
\begin{align*}
    \norm{\mathcal{N} (0)}_{( {S}_\C^0)^*}\leq a_2 \text{ with }a_2:=(C_E+ C_B)\norm{\I_\C Q- Q}_{1}.
\end{align*}
This and \eqref{equn: inf-sup-interpolation}  leads to $ \norm{D \mathcal{N}(0)^{-1} \mathcal{N}(0)}_{ {S}_\C^0} \leq b_1 $ for $b_1=2a_2/\beta_1.$

\medskip

\noindent Step 4. {\bf ($a_1b_1L\leq 1/2$ and ball condition)} Note that the constant $b_1$ in Step 4 involve interpolation estimate for the exact solution, and hence can be the controlled by tuning the discretization parameter. Therefore, Lemma \ref{Interpolation estimateConforming} implies that  $b_1\leq 1/2$ and $ h^*:= a_1 b_1 L\leq 1/2$ for  all $\mathcal{T} \in \mathbb{T}(\delta)$ such that $0<\delta \leq \delta_4 \leq \min{(\delta_3, \min({1, \beta_1/2L})C_4)}$ with $C_4:=\beta_1/( 4(C_E+C_B) C_I \norm{\Qvec}_{\mathcal{H}^2(\Omega)}).$ 

Next we establish that the closed ball $\bar{B}_{ {S}_\C^0}(Q_\C^1,r)$ with $r:=\frac{1-\sqrt{1-2h^*}}{a_1 L}-b_1$ and centered around the first Newton iterate $Q_\C^1:= 0- D\mathcal{N}(0)^{-1}\mathcal{N}(0) $ lies within the domain of definition, $\mathcal{D}$. Let $P_\C \in \bar{B}_{ {S}_\C^0}(Q_\C^1,r).$ Then this {\it ball condition}, that is, $\bar{B}_{ {S}_\C^0}(Q_\C^1,r) \subset \mathcal{D},$ follows readily from a triangle inequality   as given below
\begin{align*}
    \norm{P_\C}_1 \leq \norm{P_\C -Q_\C^1 }_1 + \norm{Q_\C^1 }_1 \leq r+b_1< 2b_1<1.
\end{align*}

\medskip

\noindent Step 5. {(\bf Conclusions)} The Newton Kantorovich theorem applied to the function $\mathcal{N}: \mathcal{D} \rightarrow ( {S}_\C^0)^*$ leads to the existence of a discrete solution $\Tilde{Q}_\C \in B_{ {S}_\C^0}(0,r),$ and the solution is unique in $\Bar{B}(0, r^*) \cap \mathcal{D}$ with $r^*:=\frac{1+\sqrt{1-2h^*}}{a_1 L}.$ Note that 
\begin{align*}
    \norm{\Tilde{Q}_\C}_1 \leq r <b_1=2 (C_E+ C_B)\norm{\I_\C Q- Q}_{1}/\beta_1.
\end{align*}
For $Q_\C:=\Tilde{Q}_\C +\I_\C Q,$ a triangle inequality and the above estimate results in the following quasi-best approximation result,
\begin{align*}
    \norm{Q - Q_\C}_1 &\leq  \norm{Q - \I_\C Q}_1 + \norm{\Tilde{Q}_\C}_1 \leq C_q \norm{ Q-\I_\C Q}_{1},
\end{align*}
where 
$$C_q:=\begin{cases}
1+ 2(C_E+ a+bC_S^3(3)(2+C_I )\norm{\Qvec}_{\mathcal{H}^2(\Omega)}  +2c C_S^4(4) (3 +2C_I^2)\norm{\Qvec}_{\mathcal{H}^2(\Omega)}^2)/\beta_1&\text{ for LDG potential},\\
     1+  2 \Big(C_E+\kappa + \frac{T}{2} \sup_{P \in \mathcal{Q}_{phy}(\widetilde{\epsilon_1})} \Big| \frac{\partial^2  f (P)}{\partial \Qvec^2 } \Big| \Big) /\beta_1 & \text{ for BM potential}.
    \end{cases}$$ 
{ The boundary condition  $Q_\C|_{\partial \Omega}=\I_\C Q_b$ follows from the fact that $\Tilde{Q}_\C \in {S}_\C^0, $ and $\I_\C Q|_{\partial \Omega}=\I_\C Q_b.$}
    This and the interpolation estimate in Lemma \ref{Interpolation estimateConforming}$(i)$ imply the error estimate $$ \norm{Q - Q_\C}_1 \leq h C_I C_q  \norm{\Qvec}_{\mathcal{H}^2(\Omega)}.$$ 
   { This provides the existence of a discrete solution with error of optimal order $h$. 
   
   Next we briefly examine the local uniqueness. Assume the existence of another discrete solution $Q_\C^*$ such that $ \norm{Q - Q_\C^*}_1 \leq h C_5  \norm{\Qvec}_{\mathcal{H}^2(\Omega)}$ for some constant $C_5 >0.$ Note that $Q_\C^*$ must be physical for the BM potential.
   Therefore, \eqref{defn: N-tilde} implies $\mathcal{N}(Q_\C^*- \I_\C Q)=0$ with  $\norm{Q_\C^*- \I_\C Q}_1 > r^*$ as $\Tilde{Q}_\C $ is the unique solution  of  $\mathcal{N}(\Tilde{Q}_\C)=0$ in $\Bar{B}(0, r^*) \cap \mathcal{D}$.  This leads to a contradiction for sufficiently small mesh-size via the inequality $1 \leq (a_1 L) r^*  < h a_1 L C_5  \norm{\Qvec}_{\mathcal{H}^2(\Omega)}.$ 
   
   This conclude the proof of Theorem \ref{thm: existance-uniqueness}.}


\section{Conclusions}

In this paper, we obtain \textit{a priori} error estimates for the conforming finite element approximation of local and global minimizers of the Landau-de Gennes  $Q$-tensor energy with either smooth or singular bulk potentials and anisotropic elastic energies. The analysis relies on the inf-sup stability of the linearized operator around a regular solution ($Q$) and around its conforming interpolant. This plays a pivotal role in the Newton-Kantorovich theorem. Existence and local uniqueness of discrete solutions and a priori error estimates follow.

For numerical implementation, the Newton scheme described in Theorem \ref{Kantorovich theorem} requires the computation of the second-order derivatives of the nonlinear potentials \eqref{LDG-bulk-density} and \eqref{BM-bulk-density} with respect to $Q$. The treatment of the LDG bulk potential $\psi_b^{LDG}$ in \eqref{LDG-bulk-density} is relatively straightforward and can be seen as a natural extension of the algorithm in \cite{DGFEM} to three dimensions. 

The primary challenge arises in handling the bulk potential $\psi_b^{BM}$ in \eqref{BM-bulk-density}, as the singular potential $f$ within this density is only implicitly defined as a function of $Q$. In \cite{Schimming2021}, the authors propose a numerical method to compute the Hessian of $f$ using a "dual minimization approach" that determines the optimal Lagrange multiplier for the constraint in $\mathcal{A}_\Qvec$ (see Section \ref{Problem formulation}), combined with Lebedev quadrature \cite{Lebedev1999} to compute the surface integrals over the unit sphere.


We here show that the crucial Hessian computation of $f$ from \cite{Schimming2021},  combined with a standard Newton method, allows to solve the Euler-Lagrange equations, as presented in Theorem \ref{Kantorovich theorem}, with guaranteed a priori error estimates.

		\bibliographystyle{amsplain}
		\bibliography{References}
  
\end{document}